\documentclass{amsproc}
\usepackage{amsmath}
\usepackage{enumerate}
\usepackage{amsmath,amsthm,amscd,amssymb}

\usepackage{latexsym}
\usepackage{upref}
\usepackage{verbatim}
\usepackage{pstricks}
\usepackage[mathscr]{eucal}
\usepackage{dsfont}

\usepackage{graphicx}
\usepackage[colorlinks,hyperindex]{hyperref}

\usepackage{hhline}

\usepackage{booktabs}

\usepackage[OT2,OT1]{fontenc}
\newcommand\cyr
{
\renewcommand\rmdefault{wncyr}
\renewcommand\sfdefault{wncyss}
\renewcommand\encodingdefault{OT2}
\normalfont
\selectfont
}
\DeclareTextFontCommand{\textcyr}{\cyr}

\newtheorem{theorem}{Theorem}
\newtheorem{lemma}[theorem]{Lemma}
\newtheorem{remark}[theorem]{Remark}

\theoremstyle{definition}
\newtheorem{definition}{Definition}

\theoremstyle{plain}
\newtheorem{proposition}[definition]{Proposition}
\newtheorem{corollary}[definition]{Corollary}

\theoremstyle{remark}
\newtheorem{example}[definition]{Example}

\chardef\bslash=`\\ 

\newcommand{\ran}{\text{\rm{Ran}}}

\newcommand{\calD}{{\mathcal D}}

\newcommand{\calH}{{\mathcal H}}

\newcommand{\calS}{{\mathcal S}}

\renewcommand{\Im}{\text{\rm Im}}

\def\sM{{\mathfrak M}}

      \def\dC{{\mathbb C}}

      \def\dR{{\mathbb R}}

\def\cD{{\mathcal D}}

\def\cS{{\mathcal S}}

\def\RE{{\rm Re\,}}

\DeclareMathOperator{\IM}{Im}

\newcommand{\eval}[2][\right]{\relax
  \ifx#1\right\relax \left.\fi#2#1\rvert}

\begin{document}

\title{Dissipation and c-Entropy in Nevanlinna-Pick Interpolation}

\author{S. Belyi*}
\address{Department of Mathematics\\ Troy University\\
Troy, AL 36082, USA\\
}
\curraddr{}
\email{sbelyi@troy.edu}
\thanks{\textit{*Corresponding author}: Sergey Belyi}


\author[Makarov]{K. A. Makarov}
\address{Department of Mathematics\\
 University of Missouri\\
  Columbia, MO 63211, USA}
\email{makarovk@missouri.edu}


\author{E. Tsekanovskii}
\address{Department of Mathematics, Niagara University,  Lewiston,
NY  14109, USA} \email{\tt tsekanov@niagara.edu}


\subjclass{Primary 47A10; Secondary 47N50, 81Q10}
\date{DD/MM/2004}


\keywords{L-system, transfer function, impedance function,  Nevanlinna-Pick interpolation;
c-entropy; dissipation coefficient; operator model; unitary equivalence.}

\begin{abstract}
We study interpolation L-systems realizing finite Nevanlinna-Pick data sets and analyze
their structural and quantitative characteristics. Explicit formulas are derived for the
c-entropy and dissipation coefficient, two intrinsic invariants that describe the
dissipative structure of interpolation L-systems. These quantities depend only on the
geometric placement of interpolation nodes in $\dC_+$, attaining maximal finite values
for purely imaginary nodes. The interpolation model $\Theta_\Delta$ and its unitary
equivalents reveal that these invariants form a direct link between analytic
interpolation data and the dynamical properties of L-systems. Particular attention is
given to symmetric configurations, where the impedance function admits an explicit
rational representation and a natural physical interpretation in terms of equivalent
$LC$-networks.

\end{abstract}

\maketitle

\tableofcontents


\section{Introduction}\label{s1}

The theory of L-systems provides a functional model framework for dissipative operators and their transfer and impedance characteristics. The formal definition of  L-systems and their elements are presented in Section \ref{s2}. The main theme of this paper is  the study L-systems whose impedance functions realize finite Nevanlinna-Pick interpolation data (see \cite{AlTs1}, \cite{ABT}). We focus on two intrinsic invariants -- the \textit{c-entropy} and the \textit{dissipation coefficient} (introduced and discussed in  \cite{BT-21, BT-22, BT-23, BT-16}) -- which capture the internal dissipative behavior and depend only on the geometry of the interpolation nodes in $\mathbb{C}_+$.

In the framework of the approach  suggested in  \cite{ABT}
we derive explicit formulas for these invariants and characterize when the c-entropy is finite, maximal, or infinite.
As an application, we show that symmetric node configurations admit a physical interpretation in terms of equivalent electrical networks of coupled LC-branches, while degenerate data correspond to purely capacitive elements. This establishes a direct link between analytic properties of interpolation L-systems and their underlying physical and energetic structure.

\smallskip
The paper is organized as follows.

Section~\ref{s2} recalls basic concepts and definitions concerning L-systems, c-entropy, and the dissipation coefficient in the scalar setting.

Section~\ref{s3} introduces the interpolation L-system $\Theta_\Delta$ associated with a finite Nevanlinna-Pick data set and establishes its key analytic properties.

Sections~\ref{s4} and~\ref{s5} present the one-dimensional multiplication-operator model and the coupling construction of L-systems, establishing explicit formulas for their transfer and impedance functions.

Section~\ref{s6} evaluates the c-entropy and dissipation coefficients for these L-systems and identifies conditions under which the entropy is finite, maximal, or infinite.

Section~\ref{s7} focuses on symmetric interpolation configurations, presenting explicit impedance formulas, connections with rational Herglotz-Nevanlinna functions, and their physical interpretation as equivalent $LC$-networks.

Finally, Section~\ref{s8} contains concluding remarks summarizing the main results and their analytical and physical implications.

\section{Preliminaries}\label{s2}

We begin by recalling several basic notions from the theory of L-systems that will be used throughout the paper.  Canonical L-systems provide an operator-theoretic framework for modeling linear stationary conservative and dissipative systems in a Hilbert space $\calH$ (see, e.g., \cite{Bro}, \cite{ABT}, \cite{BT-21}, \cite{Lv2} ).

For a pair of Hilbert spaces $\calH_1$, $\calH_2$ denote by $[\calH_1,\calH_2]$ the set of all bounded linear operators from $\calH_1$ to $\calH_2$.

Let $T$ be a bounded linear operator in a Hilbert space $\calH$, $K\in[E,\calH]$, and $J$ be a bounded, self-adjoint, and unitary operator in $E$, where $E$ is another Hilbert space with $\dim E<\infty$. Let also
\begin{equation}\label{e5-3}
\IM T=KJK^*.
\end{equation}

By definition, the  \textbf{Liv\v{s}ic canonical system } or simply \textbf{canonical L-system} is just the  array
\begin{equation}\label{BL}
 \Theta =
\left(%
\begin{array}{ccc}
  T & K & J \\
  \calH &  & E \\
\end{array}%
\right).
 \end{equation}
 The spaces $\calH$ and $E$ here  are
called \textit{state} and
\textit{input-output} spaces, and the operators
$T$, $K$, $J$ will be refered to as \textit{main},
\textit{channel}, and \textit{directing} operators, respectively.

Notice that relation \eqref{e5-3} implies
\begin{equation*}
    \ran (\IM T)\subseteq\ran(K).
\end{equation*}

We  associate with an L-system $\Theta$ two  analytic functions,  the \textbf{transfer  function} of the L-system $\Theta$
\begin{equation}\label{e6-3-3}
W_\Theta (z)=I-2iK^\ast (T-zI)^{-1}KJ,\quad z\in \rho (T),
\end{equation}
and also the \textbf{impedance function}  given by the formula
\begin{equation}\label{e6-3-5}
V_\Theta (z) = K^\ast (\RE T - zI)^{-1} K, \quad z\in  \rho (\RE T).
\end{equation}

 The transfer function $W_\Theta (z)$ of the L-system $\Theta $ and function $V_\Theta (z)$ of the form (\ref{e6-3-5}) are connected by the following relations valid for $\IM z\ne0$, $z\in\rho(T)$,
\begin{equation}\label{e6-3-6}
\begin{aligned}
V_\Theta (z) &= i [W_\Theta (z) + I]^{-1} [W_\Theta (z) - I]J,\\
W_\Theta(z)&=(I+iV_\Theta(z)J)^{-1}(I-iV_\Theta(z)J).
\end{aligned}
\end{equation}

The canonical L-system $\Theta$ in \eqref{BL} is called \emph{minimal} if the operator $T$ is \emph{prime} (completely non-self-adjoint) in $\mathcal H$, i.e., there exists no nontrivial reducing invariant subspace of $\mathcal H$ on which $T$ induces a self-adjoint operator.

We say that a canonical L-system
$$
\Theta_1=\left(%
\begin{array}{ccc}
  T_1 & K_1 & J \\
  \calH_1 &  & E \\
\end{array}%
\right), $$ is \textbf{unitarily equivalent} to an L-system
$$ \Theta_2=\left(%
\begin{array}{ccc}
  T_2 & K_2 & J \\
  \calH_2 &  & E \\
\end{array}%
\right),
$$
if there exists an isometric mapping $U$ of the space $\calH_1$ onto
$\calH_2$ such that
\begin{equation}\label{e5-16}
    UT_1=T_2U,\quad UK_1=K_2.
\end{equation}
It was shown in \cite{ABT} (see also \cite{Bro}) that  if  canonical L-systems
$\Theta_1$ and $\Theta_2$ are unitarily equivalent, then the set $\rho(T_1)$ of regular points of $T_1$ coincides with the set $\rho(T_2)$ of regular points of $T_2$, and $W_{\Theta_1}( z)=W_{\Theta_2}( z)$, ($ z\in\rho(T_1)$). Conversely, if  canonical L-systems $\Theta_1$ and $\Theta_2$ are \emph{minimal}  and $W_{\Theta_1}(z)=W_{\Theta_2}(z)$ in a neighborhood of infinity, then $\Theta_1$ and $\Theta_2$ are unitarily equivalent.

In general (without minimality), the equality of transfer functions determines the system uniquely only up to a self-adjoint direct summand; equivalently, the \emph{principal (minimal) parts} of $\Theta_1$ and $\Theta_2$ are unitarily equivalent (see \cite{ABT}, \cite{Bro}).

In the general theory of L-systems, the state-space operator $T$ acts in a Hilbert space $\calH$, the channel space $E$ is finite-dimensional, and the signature operator $J$ in $E$ determines the L-system's input-output structure
(see, e.g., \cite{Bro}, \cite{BT-21}).  In the present paper, however, we restrict our attention to the simplest nontrivial class of L-systems of the form~\eqref{BL}, for which $E=\dC$ and $J=1$, or
\begin{equation}\label{BL-1}
 \Theta =
\left(%
\begin{array}{ccc}
  T & K & 1 \\
  \calH &  & \dC \\
\end{array}%
\right).
 \end{equation}
This scalar setting suffices for the interpolation problems considered here and allows all relevant quantities (transfer and impedance functions, c-entropy, and dissipation coefficient) to be treated as scalar functions while retaining the essential operator-theoretic structure. In particular \eqref{e6-3-6} takes an easier form of scalar Caley transforms
\begin{equation}\label{e-8-WV}
V_\Theta (z) = i \frac{W_\Theta (z) - 1}{W_\Theta (z) + 1},\quad
W_\Theta(z)=\frac{1-iV_\Theta(z)}{1+iV_\Theta(z)}.
\end{equation}

Within this scalar framework, we define the \emph{c-entropy} of an L-system $\Theta$ of the form~\eqref{BL-1} by the following relation. This definition  was introduced in \cite{BT-16} and further developed in \cite{BT-21}, \cite{BT-22}, \cite{BT-23}, \cite{MTBook}.
\begin{definition}\label{d-9-ent}
Let $\Theta$ be an L-system of the form \eqref{BL-1}. The quantity
\begin{equation}\label{e-80-entropy-def}
    \calS=-\ln (|W_\Theta(-i)|),
\end{equation}
where $W_\Theta(z)$ is the transfer function of $\Theta$, is called the \textbf{coupling entropy} (or \textbf{c-entropy}) of the L-system $\Theta$.
\end{definition}

 Note that if, in addition,  the point $z=i$ belongs to $\rho(T)$, then we also have that
\begin{equation*}
     \calS=\ln (|W_\Theta(i)|).
\end{equation*}
Let us also recall the definition of the dissipation coefficient of an L-system.
\begin{definition}[{cf. \cite{BT-16}}, \cite{BT-21}]\label{d-10}
Let $\Theta$ be an L-system  of the form \eqref{BL-1} with c-entropy $\calS$. Then
the quantity
 \begin{equation}\label{e-69-ent-dis}
\calD=1-e^{-2\calS}
\end{equation}
 is called the \textbf{coefficient of dissipation} (or dissipation coefficient) of the L-system $\Theta$.
\end{definition}
Although the dissipation coefficient was introduced above via its relation to the c-entropy, it admits an equivalent representation that expresses $\calD$ directly in terms of the main operator $T$ and the channel operator $K$ of the L-system.
\begin{lemma}\label{l-1-D-TK}
Let $\Theta$  be a scalar L-system of the form \eqref{BL-1} with bounded main operator $T$ and channel operator $K$ satisfying $\IM T = K K^*$. Then the dissipation coefficient $\calD$ of $\Theta$ admits the representation
\begin{equation}\label{e-12-D-TK}
\calD=1-|W_\Theta(-i)|^2=4\,K^*(T+iI)^{-1}(T^*-iI)^{-1}K
=4\|(T+iI)^{-1}K\|^2.
\end{equation}
\end{lemma}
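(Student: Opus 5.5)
The plan is to verify the identity $1-|W_\Theta(-i)|^2 = 4\|(T+iI)^{-1}K\|^2$ directly from the definition \eqref{e6-3-3} with $J=1$. This is the standard "metric identity" for characteristic functions of colligations. The first equality in \eqref{e-12-D-TK} is immediate: by Definitions \ref{d-9-ent} and \ref{d-10}, $e^{-2\calS}=|W_\Theta(-i)|^2$. We first note that $-i\in\rho(T)$. Indeed, $\IM T = KK^*\ge 0$, so $\IM\,((T+iI)f,f) \ge \|f\|^2$ for every $f\in\calH$. Hence both $T+iI$ and $T^*-iI$ are bounded below, and since they are adjoints of each other, both are boundedly invertible.

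Next, set $R=(T+iI)^{-1}$, so that $W_\Theta(-i)=1-2iK^*RK$. Write $a=K^*RK\in\dC$ and note $\bar a = K^*R^*K$ with $R^*=(T^*-iI)^{-1}$. Then
\begin{equation*}
|W_\Theta(-i)|^2 = (1-2ia)(1+2i\bar a) = 1 - 2i(a-\bar a) + 4|a|^2 ,
\end{equation*}
and therefore
\begin{equation*}
1-|W_\Theta(-i)|^2 = 2i\,K^*(R-R^*)K - 4\,K^*R K K^* R^* K .
\end{equation*}

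The key step is the resolvent identity expressing $R-R^*$ through $\IM T$. We have
\begin{equation*}
R-R^* = R\big[(T^*-iI)-(T+iI)\big]R^* = R(-2i\,\IM T - 2iI)R^* = -2iR(KK^*+I)R^* .
\end{equation*}
Substituting this gives $2iK^*(R-R^*)K = 4K^*RKK^*R^*K + 4K^*RR^*K$. The quadratic term cancels with $-4K^*RKK^*R^*K$, leaving
\begin{equation*}
1-|W_\Theta(-i)|^2 = 4K^*RR^*K = 4K^*(T+iI)^{-1}(T^*-iI)^{-1}K .
\end{equation*}

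It remains to identify this with the norm in \eqref{e-12-D-TK}. Here care with adjoints is needed, since $K^*RR^*K = \|R^*K\|^2 = \|(T^*-iI)^{-1}K\|^2$. Read literally, the statement's last expression $\|(T+iI)^{-1}K\|^2$ equals $K^*R^*RK$ instead. I expect this reconciliation to be the main obstacle.

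My first attempt is to redo the computation with the factors ordered the other way. Writing $R-R^* = R^*\big[(T^*-iI)^{-1}\ldots\big]$ is not available in the same form, because $R-R^*=R^*(T^*-iI - T - iI)R$ requires $R^*[\cdot]R$ with the factors reversed. Using $R - R^* = R^*\big[(T^*-iI)-(T+iI)\big]R$, which also holds since $R^*(T^*-iI)R = R$ and $R^*(T+iI)R=R^*$, gives $R-R^* = -2iR^*(KK^*+I)R$. Repeating the cancellation with this form yields $4K^*R^*RK$, provided the cross term $K^*R^*KK^*RK = |a|^2$ again matches $4|a|^2$, which it does. So both expressions $4K^*RR^*K$ and $4K^*R^*RK$ equal $1-|W_\Theta(-i)|^2$. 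This gives the norm form $4\|(T+iI)^{-1}K\|^2$ as well as the operator-product form, and it interprets the middle expression of \eqref{e-12-D-TK} accordingly.
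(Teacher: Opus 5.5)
Your proof is correct. For the first two equalities it follows the paper's argument. The paper cites the metric identity $1-W_\Theta(z)\overline{W_\Theta(w)}=2i(z-\bar w)K^*(T-zI)^{-1}(T^*-\bar wI)^{-1}K$ and sets $z=w=-i$. You derive exactly that special case from the factorization $R-R^*=-2iR(KK^*+I)R^*$. You also check that $-i\in\rho(T)$, which the paper takes for granted.

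The real difference is the last equality, and there your treatment is better than the paper's. The paper passes from $4K^*(T+iI)^{-1}(T^*-iI)^{-1}K$ to $4\|(T+iI)^{-1}K\|^2$ only by remarking that $T^*-iI=(T+iI)^*$. As you observe, that remark gives $K^*RR^*K=\|(T^*-iI)^{-1}K\|^2$, not $\|(T+iI)^{-1}K\|^2=K^*R^*RK$.

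Your second factorization
\[
R-R^*=R^*\big[(T^*-iI)-(T+iI)\big]R=-2iR^*(KK^*+I)R
\]
independently produces $1-|W_\Theta(-i)|^2=4K^*R^*RK=4\|(T+iI)^{-1}K\|^2$. This is exactly the justification the paper is missing. As a byproduct, it shows $\|(T+iI)^{-1}K\|=\|(T^*-iI)^{-1}K\|$.

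It is worth saying explicitly why both routes work. The first factorization is the one that cancels the quadratic term in $1-W\overline{W}$, and the second cancels it in $1-\overline{W}W$. These two expressions coincide only because $E=\dC$. In the matrix case they would give two different identities, one for $I-WW^*$ and one for $I-W^*W$, and the two norms need not agree.

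One cosmetic point: the sentence in which you abandon the attempt to write $R-R^*=R^*[(T^*-iI)^{-1}\ldots]$ is garbled. Replace it with the two-line check $R^*(T^*-iI)R=R$ and $R^*(T+iI)R=R^*$.
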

\begin{proof}
The first part of \eqref{e-12-D-TK} follows immediately from \eqref{e-80-entropy-def} and \eqref{e-69-ent-dis}.
 Furthermore, recall that the transfer function of $\Theta$ here is given by \eqref{e6-3-3} or
$$
W_\Theta(z)=1-2i\,K^*(T-zI)^{-1}K.
$$
A direct computation (see also \cite{ABT}, \cite{Bro}) yields the identity
$$
1-W_\Theta(z)\overline{W_\Theta(w)}=2i\,(z-\overline{w})\,K^*(T-zI)^{-1}(T^*-\overline{w}I)^{-1}K.
$$
Setting $z=w=-i$ we obtain
$$
1-|W_\Theta(-i)|^2=4\,K^*(T+iI)^{-1}(T^*-iI)^{-1}K.
$$
Finally, since $T^*-iI=(T+iI)^*$, the last expression equals $4\|(T+iI)^{-1}K\|^2$, which completes the proof.
\end{proof}
Formula \eqref{e-12-D-TK} shows that the dissipation coefficient can be expressed directly in terms of the main operator $T$ and the channel direction $K$, without explicit reference to the c-entropy.

The notions introduced in this section (particularly the scalar formulation of L-systems of the form~\eqref{BL-1} and the associated invariants of c-entropy and dissipation coefficient) provide the analytical foundation for the interpolation models developed in the sequel.  In the next section, we construct an explicit interpolation L-system in the Pick form governed by the interpolation data $\{(z_\ell,v_\ell)\}$.

\section{System Interpolation and L-systems in the Pick Form}\label{s3}

We now recall the L-system formulation of the Nevanlinna-Pick interpolation problem and the following general definition of interpolation systems found in \cite{ABT}. Let $E$ be a finite-dimensional Hilbert space and let $\{z_\ell, v_\ell\}$ denote the interpolation data as described below.

\begin{definition}[\cite{ABT}]\label{Def3}
Let $\Theta$ be the Liv\v{s}ic canonical system of the form \eqref{BL}
and $V_\Theta(z)$ its impedance function defined by \eqref{e6-3-3},\eqref{e6-3-6}.
Then $\Theta$ is called an \textbf{interpolation system} (solution) in the
Nevanlinna-Pick interpolation problem for the data
$\{z_\ell \in \dC_+,\, \ell=1,\dots,m\}$ and
$\{v_\ell \in [E,E],\, \IM v_\ell \ge 0,\, \ell=1,\dots,m\}$ if
\[
   V_\Theta(z_\ell) = v_\ell, \qquad \ell=1,\dots,m.
\]
\end{definition}


For the remainder of this paper we, however,  restrict our attention to the scalar case $E=\mathbb{C}$ and L-systems of the form \eqref{BL-1}.  In this setting, each interpolation value $v_\ell$ is a complex number with $\IM v_\ell > 0$, so that the interpolation data $\{(z_\ell, v_\ell)\}$ consist of points in the upper half-plane $\dC_+$.   All subsequent constructions and results will therefore be formulated for $E=\mathbb{C}$.

Following \cite{ABT}, \cite{AlTs1} we set
\begin{equation}\label{PQ}
{\mathbb P}=\left(\frac{v_k-\bar v_j}{z_k-\bar
z_j}\right)_{k,j=1,\ldots m}, \qquad {\mathbb Q}=
\left(\frac{z_kv_k-\bar z_j\bar v_j}{z_k-\bar z_j}
\right)_{k,j=1,\ldots m}.
\end{equation}

We now summarize the structural results on interpolation systems obtained in \cite{ABT}.
Suppose $\{z_k\}_{k=1}^m \subset \dC_+$ and $\{v_k\}_{k=1}^m \subset \dC_+$ are interpolation data
for which the Pick matrix $\mathbb{P}$ in \eqref{PQ} is strictly positive.\footnote{A matrix is called
strictly positive if it is non-negative and invertible.} In this case, there exists a
scattering interpolation system $\Theta$ with state space of dimension $m$ that solves the
interpolation problem for this data.

\medskip
\noindent
\textbf{Construction of the system.}
Consider the space $\dC^m$ with inner product
\[
   (\xi,\eta)_{\dC_{\mathbb{P}}^m} = \eta^* \mathbb{P}\xi.
\]
The operator $\vec A = \mathbb{P}^{-1}\mathbb{Q}$ is self-adjoint with respect to this inner product.
Let
\[
   \vec g = \mathbb{P}^{-1}\varphi,
   \qquad \varphi = \begin{pmatrix}  v_1 \\ \vdots \\  v_m \end{pmatrix},
\]
and define
\begin{equation}\label{49a}
   \vec T = \vec A + i(\cdot,\vec g)_{\dC_{\mathbb{P}}^m}\,\vec g,
   \qquad
   \vec K c = c\cdot \vec g.
\end{equation}
Then ${\rm Im}\,\vec T = \vec K \vec K^*$, and the Liv\v{s}ic canonical scattering system
\[
   \vec\Theta =
   \begin{pmatrix}
      \vec T & \vec K & 1 \\
      \dC_{\mathbb{P}}^m & & \dC
   \end{pmatrix}
\]
is well defined. The associated impedance function has the form
\begin{equation}\label{50}
   V_{\vec\Theta}(z) = \vec K^*(\RE \vec T - zI)^{-1}\vec K.
\end{equation}
A direct calculation shows that $V_{\vec\Theta}(z_k)=v_k$ for $k=1,\dots,m$.
Indeed, since
$\RE\vec T=\vec A=\mathbb{P}^{-1}\mathbb{Q}$ and $\vec g=\mathbb{P}^{-1}\varphi$,
we have
\begin{align*}
V_{\vec\Theta}(z)
&= \left ((\mathbb{P}^{-1}\mathbb{Q}-zI)^{-1}\vec g,\vec g\right)_{\dC_{\mathbb{P}}^m}
= \left (\mathbb{P}(\mathbb{P}^{-1}\mathbb{Q}-zI)^{-1}\vec g,\vec g\right)_{\dC^m}
\\
&= \left (\mathbb{P}(\mathbb{P}^{-1}\mathbb{Q}-zI)^{-1}\mathbb{P}^{-1}\varphi,\mathbb{P}^{-1}\varphi\right)_{\dC^m}
= \left ((\mathbb{P}^{-1}\mathbb{Q}-zI)^{-1}\mathbb{P}^{-1}\varphi,\varphi\right)_{\dC^m}
\\
&= \left ((\mathbb{Q}-z\mathbb{P})^{-1}\varphi,\varphi\right)_{\dC^m}
= \varphi^*(\mathbb{Q}-z\mathbb{P})^{-1}\varphi.
\end{align*}
Using the definitions of $\mathbb{P}$ and $\mathbb{Q}$, one verifies that
\[
(\mathbb{Q}-z_k\mathbb{P})_{kj}=\frac{z_kv_k-\bar z_j\bar v_j}{z_k-\bar z_j}-z_k\frac{v_k-\bar v_j}{z_k-\bar z_j}=\bar v_j,\quad j=1,\dots,m,
\]
so the $k$-th row of $\mathbb{Q}-z_k\mathbb{P}$ equals
$(\bar v_1,\dots,\bar v_m)=\varphi^*$. Hence
\[
e_k^{\,T}(\mathbb{Q}-z_k\mathbb{P})=\varphi^*,
\]
and therefore
\[
\varphi^*(\mathbb{Q}-z_k\mathbb{P})^{-1}=e_k^{\,T},
\]
where $e_k$ denotes the column vector in $\dC^m$ whose $k$-th entry is $1$ and all other entries are $0$. Therefore,
\[
V_{\vec\Theta}(z_k)
= \left ((\mathbb{Q}-z_k\mathbb{P})^{-1}\varphi,\varphi\right)_{\dC^m}
=\varphi^*(\mathbb{Q}-z_k\mathbb{P})^{-1}\varphi
= e_k^{\,T}\varphi
= v_k,
\]
as claimed.

Thus $\vec\Theta$ is indeed a solution of the Nevanlinna-Pick interpolation problem.
We will refer to this realization as the \textbf{Liv\v{s}ic system in the Pick form}.

Recall that a bounded operator $T$ in a Hilbert space is called \emph{dissipative} if
\[
   \IM (Tx,x) \;\geq\; 0, \qquad x \in \mathcal{H}.
\]
For the operator $\vec T$ defined in \eqref{49a}, one computes
\[
   \IM \vec T = \tfrac{1}{2i}\,(\vec T - \vec T^*)
              = (\cdot,\vec g)_{\dC_{\mathbb{P}}^m}\,\vec g
              = \vec K \vec K^* .
\]
Since $\vec K \vec K^*$ is positive semidefinite, we obtain for every $x \in \dC_{\mathbb{P}}^m$
\[
   (\IM \vec T\, x, x) = (\vec K \vec K^* x, x) = \|\vec K^* x\|^2 \;\geq\; 0.
\]
Therefore $\vec T$ is dissipative. 

It is shown in \cite{ABT} that if the canonical scattering system of the form \eqref{BL-1}
\[
   \Theta =
   \begin{pmatrix}
      T & K & 1 \\
      \mathcal H & & \dC
   \end{pmatrix},
   \qquad {\rm Im}\,T = KK^*,
\]
is an interpolation system for the same data with $\dim \mathcal H = m$, provided
\begin{equation}\label{62}
   \operatorname{cls}\{ (T - z_k I)^{-1}K\dC : k=1,\dots,m\} = \mathcal H,
\end{equation}
then $\mathbb{P}$ is strictly positive and $\Theta$ is unitarily equivalent to
the system $\vec\Theta$ in the Pick form.

Moreover, when $\mathbb{P}>0$, the interpolation values corresponding to eigenvalues of $\vec T$ are necessarily equal to $i$. More precisely, suppose that $z_{k_1},\dots,z_{k_p}$ is a (nonempty) subset of the interpolation nodes that are eigenvalues of $\vec T$. Then
\[
v_{k_1}=\cdots=v_{k_p}=i.
\]
Conversely, if for some indices $k_1,\dots,k_p$ the interpolation data satisfy
\[
v_{k_1}=\cdots=v_{k_p}=i,
\]
then the corresponding nodes $z_{k_1},\dots,z_{k_p}$ are eigenvalues of $\vec T$.

This correspondence follows from the Cayley transform relation between the impedance and transfer functions (see \eqref{e-8-WV}), since the condition $V_{\vec\Theta}(z_k)=i$ is equivalent to a pole of $W_{\vec\Theta}(z)$ at $z=z_k$, and hence to $z_k$ being an eigenvalue of $\vec T$. A detailed proof can be found in \cite[Theorems 11.6.3--11.6.4]{ABT}.

The preceding characterization highlights the distinguished role of the value $i$. We now turn to the special case in which all interpolation values satisfy $v_k=i$, where the corresponding interpolation system admits an explicit model realization.

\medskip
\noindent
\textbf{Explicit model for $v_k=i$.}
The case when all interpolation values equal $i$ admits a simple model realization.
Define
\begin{equation}\label{99}
   T_\Delta =
   \begin{pmatrix}
      z_1 & 2i\sqrt{\IM z_1\cdot\IM z_2} & \cdots & 2i\sqrt{\IM z_1\cdot\IM z_m} \\
      0   & z_2 & \cdots & 2i\sqrt{\IM z_2\cdot\IM z_m} \\
      \vdots & \vdots & \ddots & \vdots \\
      0 & 0 & \cdots & z_m
   \end{pmatrix},
\end{equation}
so that $\IM T_\Delta = (\cdot,g_\Delta)g_\Delta$ with
\[
   g_\Delta = \begin{pmatrix}\sqrt{\IM z_1}\\ \sqrt{\IM z_2}\\ \vdots \\ \sqrt{\IM z_m}\end{pmatrix}.
\]
It was shown in \cite[Theorem 11.6.5]{ABT} that the corresponding L-system
\begin{equation}\label{e-27-Theta}
   \Theta_\Delta =
   \begin{pmatrix}
      T_\Delta & K_\Delta & 1 \\
      \dC^m & & \dC
   \end{pmatrix},
   \qquad K_\Delta(c)=c\cdot g_\Delta,
\end{equation}
has transfer function
\begin{equation}\label{e-28=W}
   W_{\Theta_\Delta}(z) = \prod_{k=1}^m \frac{z-\bar z_k}{z-z_k},
\end{equation}
and impedance function
\[
   V_{\Theta_\Delta}(z) = i\frac{W_{\Theta_\Delta}(z)-1}{W_{\Theta_\Delta}(z)+1}.
\]
Clearly $V_{\Theta_\Delta}(z_k)=i$ for $k=1,\dots,m$. Substituting \eqref{e-28=W} into the Cayley transform relation above we obtain the following explicit formula for the impedance function:
\begin{equation}\label{e-29-Vz}
V_{\Theta_\Delta}(z)
= i\,\frac{\displaystyle
\prod_{k=1}^{m}(z-\bar z_k)
   - \prod_{k=1}^{m}(z-z_k)}
{\displaystyle
\prod_{k=1}^{m}(z-\bar z_k)
   + \prod_{k=1}^{m}(z-z_k)},
\qquad z \in \dC_+.
\end{equation}
It follows from the construction and from \cite[Chapter~11]{ABT} that the interpolation L-system $\Theta_\Delta$ is minimal.

Furthermore, for any other interpolation system $\Theta$ with the same data and $\dim \mathcal H=m$, the transfer functions coincide, and $\Theta$ is unitarily equivalent to $\Theta_\Delta$ (see \cite[Theorem 11.6.5]{ABT}). Thus the solution is unique up to unitary equivalence.

In a summary, we have described the interpolation L-systems in the Pick form, following \cite{ABT}.
For strictly positive Pick data, one obtains a canonical scattering L-system $\vec\Theta$ whose
state-space operator $\vec T$ is dissipative and uniquely determined up to unitary equivalence.
The spectral properties of $\vec T$ reflect the interpolation values: if a node $z_k$ is an
eigenvalue of $\vec T$, then the corresponding value must be $v_k=i$, and conversely, whenever
$v_k=i$ the point $z_k$ belongs to the spectrum of $\vec T$.
In the special case when all interpolation values equal $i$, the system admits an explicit model
realization $\Theta_\Delta$, and every other solution is unitarily equivalent to it.

These general interpolation results provide the foundation for the explicit constructions developed in the next section.
In particular, we now turn to the study of L-systems associated with multiplication operators, which will serve as the building blocks for the subsequent analysis of coupling, c-entropy, and dissipation coefficients.

\section{Multiplication Operators and Coupling of L-systems}\label{s4}

In this section we present, in prose, the one-dimensional multiplication-operator model and the coupling of such canonical L-systems, recording the explicit formulas for the main and channel operators as well as the resulting transfer and impedance functions. We also state the multiplicativity of transfer functions under coupling. The material is somewhat standard (see \cite{Bro,BMkT-2,BT-21,BT-23,MT10}) and will be used later in our discussion of c-entropy and the dissipation coefficient.

\medskip
\noindent\textbf{Multiplication operator model.}
Let $\calH$ be a one-dimensional Hilbert space with normalized basis vector $h_0\in\calH$.
For a fixed $\lambda_0\in\dC$ with $\IM \lambda_0 >0$, define the multiplication operator
\begin{equation}\label{mult-T}
   Th = \lambda_0 h, \qquad h\in\calH.
\end{equation}
Its adjoint is $T^*h = \bar\lambda_0 h$, so that
\[
   \IM T = (\IM \lambda_0)I, \qquad \RE T = (\RE \lambda_0)I.
\]
Introducing the channel operator
\begin{equation}\label{mult-K}
   Kc = (\sqrt{\IM\lambda_0}\,c)h_0, \qquad c\in\dC,
\end{equation}
one checks that $KK^* = (\IM\lambda_0)I = \IM T$.
Thus the L-system
\begin{equation}\label{mult-Theta}
   \Theta =
   \begin{pmatrix}
      T & K & 1 \\
      \calH & & \dC
   \end{pmatrix}
\end{equation}
is well defined. Since $\dim\calH=1$, the multiplication model \eqref{mult-Theta}
is automatically minimal. Its transfer and impedance functions take the simple closed forms
\begin{equation}\label{mult-W}
   W_\Theta(z) = 1 - 2i\frac{\IM\lambda_0}{\lambda_0 - z}
               = \frac{\bar\lambda_0 - z}{\lambda_0 - z},
\end{equation}
\begin{equation}\label{mult-V}
   V_\Theta(z) = \frac{\IM\lambda_0}{\RE\lambda_0 - z}.
\end{equation}
By inspection, $V_\Theta(z)$ is a Herglotz-Nevanlinna function.
This one-dimensional multiplication model provides the simplest nontrivial
example of a dissipative L-system and will serve as a building block
for more complicated constructions.

\medskip
\noindent\textbf{Coupling of multiplication operator L-systems.}
Now suppose we have two such systems associated with
multiplication operators
\begin{equation}\label{mult-T12}
   T_1 h = \lambda_0 h, \qquad T_2 h = \mu_0 h, \qquad h\in\calH,
\end{equation}
with corresponding L-systems $\Theta_1$ and $\Theta_2$ of the form \eqref{mult-Theta}.
Following \cite{BT-23}, their coupling is defined by
\begin{equation}\label{coupling}
   \Theta =
   \begin{pmatrix}
      \mathbf{T} & \mathbf{K} & 1 \\
      \calH\oplus\calH & & \dC
   \end{pmatrix},
\end{equation}
where
\begin{equation}\label{T-K-coupling}
   \mathbf{T} = \begin{pmatrix} T_1 & 2iK_1K_2^* \\ 0 & T_2 \end{pmatrix},
   \qquad
   \mathbf{K} = \begin{pmatrix} K_1 \\ K_2 \end{pmatrix}.
\end{equation}
A direct computation shows that $\IM \mathbf{T} = \mathbf{K}\mathbf{K}^*$,
so $\Theta$ is again a canonical L-system.
The transfer function of the coupled system satisfies the multiplicative identity
\begin{equation}\label{mult-theorem}
   W_{\Theta}(z) = W_{\Theta_1}(z)\cdot W_{\Theta_2}(z),
\end{equation}
which is the essence of the so-called Multiplication Theorem (see also Appendix~A).

The corresponding impedance function admits the explicit representation
\begin{equation}\label{V-coupling}
   V_{\Theta}(z) = \frac{\IM(\lambda_0+\mu_0)z - \IM(\lambda_0\mu_0)}
                        {\RE(\lambda_0+\mu_0)z - \RE(\lambda_0\mu_0) - z^2}.
\end{equation}
This formula follows from substituting \eqref{mult-W} into
$V_\Theta(z)=i\frac{W_\Theta(z)-1}{W_\Theta(z)+1}$ and simplifying.

\begin{lemma}[Minimality under finite coupling]\label{lem:coupling-minimal}
Let $\Theta_1,\dots,\Theta_n$ be minimal canonical L-systems of the form
\eqref{mult-Theta} acting in one-dimensional Hilbert spaces, and let
$\Theta=\Theta_1\cdot\Theta_2\cdots\Theta_n$ denote their (iterated) coupling.
Then the coupled L-system $\Theta$ is minimal.
\end{lemma}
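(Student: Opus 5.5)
The plan is to reduce minimality to a spectral statement about the main operator of the coupled system. The coupled state space $\calH=\calH_1\oplus\cdots\oplus\calH_n$ is finite-dimensional, and the main operator $\mathbf{T}$ is block upper triangular with the scalar multiplication operators $T_k h=\lambda_k h$ on the diagonal. Here $\IM\lambda_k>0$ by the definition of the model \eqref{mult-T}. Primeness of $\mathbf{T}$ should then follow from the fact that $\mathbf{T}$ has no real eigenvalues.

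\emph{Step 1 (triangular structure).} I will show by induction on $n$ that the iterated coupling $\Theta_1\cdot\Theta_2\cdots\Theta_n$ has main operator
\[
\mathbf{T}=\begin{pmatrix}\mathbf{T}' & 2i\mathbf{K}'K_n^*\\ 0 & T_n\end{pmatrix},
\qquad
\mathbf{K}=\begin{pmatrix}\mathbf{K}'\\ K_n\end{pmatrix},
\]
where $(\mathbf{T}',\mathbf{K}')$ are the main and channel operators of $\Theta_1\cdots\Theta_{n-1}$. This is the coupling rule \eqref{T-K-coupling} applied to the pair $(\Theta_1\cdots\Theta_{n-1},\Theta_n)$. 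The same computation that gives $\IM\mathbf{T}=\mathbf{K}\mathbf{K}^*$ in the two-factor case shows that each intermediate coupling is again a canonical L-system, so the recursion is legitimate. By induction, $\mathbf{T}$ is upper triangular in the basis $h_1,\dots,h_n$ with diagonal entries $\lambda_1,\dots,\lambda_n$. Hence
\[
\sigma(\mathbf{T})=\{\lambda_1,\dots,\lambda_n\}\subset\dC_+ .
\]
Verifying this bookkeeping carefully, including that the order of the factors and the grouping of the iterated coupling do not affect the triangular form, is the only step I expect to require some care.

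\emph{Step 2 (no self-adjoint part).} Suppose $\mathcal M\neq\{0\}$ is a subspace of $\calH$ that reduces $\mathbf{T}$ and on which $\mathbf{T}|_{\mathcal M}$ is self-adjoint. Since $\dim\mathcal M<\infty$, the operator $\mathbf{T}|_{\mathcal M}$ has an eigenvector $x\in\mathcal M$ with a real eigenvalue $t$. Then $\mathbf{T}x=tx$, so $t\in\sigma(\mathbf{T})$. This contradicts $\sigma(\mathbf{T})\subset\dC_+$. Therefore no nontrivial such subspace exists, $\mathbf{T}$ is prime, and $\Theta$ is minimal.

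As a cross-check, one could instead use $\IM\mathbf{T}=\mathbf{K}\mathbf{K}^*$. On a self-adjoint reducing subspace $\mathcal M$ one has $\mathbf{K}^*x=0$ for all $x\in\mathcal M$. Hence $\mathcal M\subset\ker\mathbf{K}^*$ is $\mathbf{T}$-invariant, and $\mathbf{T}$ acts on it as $\mathbf{T}^*$. This again produces a real eigenvalue of $\mathbf{T}$, giving the same contradiction. The spectral argument of Step 2 is shorter, so I would present that one.
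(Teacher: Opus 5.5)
Your proof is correct, but it takes a different route from the paper. The paper uses the controllability form of minimality, $\operatorname{cls}\{(\mathbf T-zI)^{-1}\mathbf K\dC : z\in\rho(\mathbf T)\}=\calH_1\oplus\calH_2$. It writes the resolvent of a two-factor coupling in upper-triangular form, asserts that varying $z$ and $c$ fills out both summands, and then inducts on the number of factors. You instead work directly with the definition in Section~\ref{s2}, namely primeness of $\mathbf T$. Triangularity gives $\sigma(\mathbf T)=\{\lambda_1,\dots,\lambda_n\}\subset\dC_+$, and a nonzero subspace on which $\mathbf T$ acts self-adjointly would supply a real eigenvalue of $\mathbf T$.

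Your route has several advantages. It needs no resolvent computation. It does not rely on the equivalence between primeness and the cls condition, which the paper uses without stating it. It does not depend on the order or grouping of the iterated coupling. It also avoids two thin spots in the paper's argument:
\begin{itemize}
\item The ``varying $z$'' step requires checking that $(\mathbf T-zI)^{-1}\mathbf K\,1$ and $(\mathbf T-wI)^{-1}\mathbf K\,1$ are independent for $z\neq w$. They are: these vectors are proportional to $\bigl(\sqrt{\IM\lambda_0}\,(\bar\mu_0-z),\ \sqrt{\IM\mu_0}\,(\lambda_0-z)\bigr)$, so the determinant is a nonzero multiple of $(z-w)(\bar\mu_0-\lambda_0)$.
\item The induction step applies the two-factor case to a first factor that is no longer one-dimensional, which is not the case that was proved.
\end{itemize}

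The paper's cls criterion has the advantage of remaining decisive in general. In infinite dimensions a prime dissipative operator can have real spectrum, so the location of the spectrum alone cannot settle minimality there. In your finite-dimensional setting, however, your criterion loses nothing. If $\mathbf Tx=tx$ with $t\in\dR$, then $\|\mathbf K^*x\|^2=\IM(\mathbf Tx,x)=0$. Hence $\mathbf T^*x=tx$, and $\operatorname{span}\{x\}$ reduces $\mathbf T$ to a self-adjoint part. So such a $\mathbf T$ with $\IM\mathbf T=\mathbf K\mathbf K^*$ is prime exactly when it has no real eigenvalues.
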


\begin{proof}
We first prove the claim for $n=2$.
Let $\Theta_1$ and $\Theta_2$ be as in the statement and let $\Theta$ be their coupling.
Minimality of a canonical L-system
$$
\Theta=
\begin{pmatrix}
T & K & 1\\
\mathcal H&&\dC
\end{pmatrix}
$$
means
\[
\operatorname{cls}\{(T-zI)^{-1}K\dC:\ z\in\rho(T)\}=\mathcal H .
\]
Since $\dim\calH_1=\dim\calH_2=1$, each $\Theta_j$ is minimal.

For the coupling $\Theta$ acting in $\calH_1\oplus\calH_2$ we have
\[
\mathbf T=
\begin{pmatrix}
T_1&2iK_1K_2^*\\
0&T_2
\end{pmatrix},
\qquad
\mathbf K=
\begin{pmatrix}
K_1\\K_2
\end{pmatrix}.
\]
For $z\in\rho(\mathbf T)$ the resolvent is upper triangular,
\[
(\mathbf T-zI)^{-1}=
\begin{pmatrix}
(T_1-zI)^{-1}&*\\
0&(T_2-zI)^{-1}
\end{pmatrix},
\]
hence
\[
(\mathbf T-zI)^{-1}\mathbf K c
=
\begin{pmatrix}
(T_1-zI)^{-1}K_1c+*\\
(T_2-zI)^{-1}K_2c
\end{pmatrix}.
\]
Varying $c\in\dC$ and $z\in\rho(\mathbf T)$ shows that the cyclic subspace generated by $(\mathbf T-zI)^{-1}\mathbf K\dC$ contains both $\calH_1$ and $\calH_2$. Therefore $\Theta$ is minimal.

For general $n$, set $\Theta^{(1)}=\Theta_1$ and define iteratively
$\Theta^{(k)}=\Theta^{(k-1)}\cdot\Theta_k$.
By the already proved two-factor case, if $\Theta^{(k-1)}$ is minimal then so is
$\Theta^{(k)}$.
Since $\Theta^{(1)}$ is minimal, induction yields minimality of $\Theta^{(n)}=\Theta$.
\end{proof}

\begin{remark}[Invariance under unitary equivalence]
\label{r-4}
If two L-systems $\Theta_1,\Theta_2$ of the form \eqref{BL} are unitarily equivalent, then (see \cite{ABT}) $W_{\Theta_1}(z)\equiv W_{\Theta_2}(z)$ and, consequently
\[
   \cS(\Theta_1)=\cS(\Theta_2), \qquad \cD(\Theta_1)=\cD(\Theta_2).
\]
Hence $\cS$ and $\cD$ are intrinsic invariants of the interpolation problem, independent
of the choice of realization. 
\end{remark}

The coupling construction preserves the canonical form of the L-system and multiplies transfer functions as in \eqref{mult-theorem}. Together with the continuity of Donoghue data classes established in \cite{BT-23}, this framework provides the theoretical foundation for the introduction of c-entropy and dissipation coefficients in the next section. For completeness, an independent proof of the Multiplication Theorem is included in Appendix~\ref{A}.

\section{c-Entropy and DC of an L-system with multiplication operator}\label{s5}

Having introduced the definitions of c-entropy and dissipation coefficient in Section \ref{s2}, and developed in Section~\ref{s4} the explicit models of multiplication-operator L-systems together with their couplings, we are now in a position to evaluate these invariants concretely. The results presented below follow \cite{BT-23} and provide closed-form expressions that will serve as the basis for later applications.

In this section we evaluate explicitly the c-entropy and the dissipation coefficient for L-systems associated with multiplication operators, and then extend these computations to their couplings. The relevant definitions of c-entropy and dissipation coefficient were introduced in Section~2, and here we apply them to obtain closed-form expressions. The results summarized below follow \cite{BT-23}.

\medskip
\noindent\textbf{Single multiplication operator.}
Consider an L-system $\Theta$ of the form \eqref{mult-Theta} built from a multiplication operator $T h=\lambda_0 h$ with $\IM \lambda_0>0$. Its transfer function \eqref{mult-W} can be used to compute the c-entropy directly. Indeed,
$$
   |W_\Theta(-i)| = \left|\frac{\bar\lambda_0+i}{\lambda_0+i}\right|
   = \sqrt{\frac{(\RE\lambda_0)^2+(1-\IM\lambda_0)^2}{(\RE\lambda_0)^2+(1+\IM\lambda_0)^2}}.
$$
By definition, $\cS=-\ln|W_\Theta(-i)|$, which simplifies to
\begin{equation}\label{entropy-single}
   \cS = \tfrac{1}{2}\ln\!\left(
          \frac{(\RE\lambda_0)^2+(1+\IM\lambda_0)^2}
               {(\RE\lambda_0)^2+(1-\IM\lambda_0)^2}
        \right).
\end{equation}

The dissipation coefficient $\cD$ of $\Theta$ is obtained from $\cD=1-e^{-2\cS}$, yielding
\begin{equation}\label{diss-single}
   \cD = \frac{4\,\IM\lambda_0}{(\RE\lambda_0)^2+(1+\IM\lambda_0)^2}.
\end{equation}
Thus, for the one-dimensional multiplication model both invariants are available in closed form in terms of the real and imaginary parts of $\lambda_0$.

\medskip
\noindent\textbf{Coupling of two multiplication models.}
Now let $\Theta$ be the coupling of two systems $\Theta_1$ and $\Theta_2$ of the form \eqref{mult-Theta}, with defining parameters $\lambda_0$ and $\mu_0$ respectively. From the multiplicativity of transfer functions \eqref{mult-theorem}, we have
\[
   W_\Theta(-i) = W_{\Theta_1}(-i)\,W_{\Theta_2}(-i),
\]
and therefore
\[
   \cS = -\ln|W_\Theta(-i)| = -\ln|W_{\Theta_1}(-i)| - \ln|W_{\Theta_2}(-i)|
       = \cS_1 + \cS_2.
\]
Hence the c-entropy is additive under coupling, justifying the terminology \emph{coupling entropy}. If either $\cS_1$ or $\cS_2$ is infinite, then so is $\cS$.

In terms of $\lambda_0$ and $\mu_0$, the c-entropy of the coupled system can be written as
\begin{equation}\label{entropy-coupling}
   \cS = \tfrac{1}{2}\ln\!\left(
          \frac{(\RE\lambda_0)^2+(1+\IM\lambda_0)^2}{(\RE\lambda_0)^2+(1-\IM\lambda_0)^2}
        \cdot
          \frac{(\RE\mu_0)^2+(1+\IM\mu_0)^2}{(\RE\mu_0)^2+(1-\IM\mu_0)^2}
        \right).
\end{equation}

\medskip
\noindent\textbf{Dissipation coefficient under coupling.}
If $\cD$, $\cD_1$, and $\cD_2$ denote the dissipation coefficients of $\Theta$, $\Theta_1$, and $\Theta_2$ respectively, then the identity $\cS=\cS_1+\cS_2$ combined with $\cD=1-e^{-2\cS}$ gives
\[
   1-\cD = (1-\cD_1)(1-\cD_2),
\]
or equivalently
\begin{equation}\label{diss-coupling-general}
   \cD = \cD_1+\cD_2-\cD_1\cD_2.
\end{equation}
Thus, the dissipation coefficient of the coupling is determined algebraically by the coefficients of its components.

Finally, substituting the explicit formulas \eqref{diss-single} for $\cD_1$ and $\cD_2$ yields a closed form in terms of $\lambda_0$ and $\mu_0$:
\begin{equation}\label{diss-coupling}
   \cD = \frac{4\,\IM\lambda_0\,(|\mu_0|^2+1) + 4\,\IM\mu_0\,(|\lambda_0|^2+1)}
              {\big[(\RE\lambda_0)^2+(1+\IM\lambda_0)^2\big]\,
               \big[(\RE\mu_0)^2+(1+\IM\mu_0)^2\big]}.
\end{equation}

\begin{remark}[Coupling versus equivalence]
\label{r-5}
The interpolation system $\Theta_\Delta$ of the form \eqref{e-27-Theta} constructed in Section~\ref{s3} is not literally a coupling of one-dimensional multiplication models. Its construction is canonical and tied
to the interpolation data. However, its transfer function
\[
   W_{\Theta_\Delta}(z)=\prod_{k=1}^m \frac{z-\bar z_k}{z-z_k}
\]
coincides with that of the $m$ coupled multiplication-operator L-systems obtained from parameters $\lambda_0=z_k$.
Thus $\Theta_\Delta$ is unitarily equivalent to such a coupling, since both realizations are minimal (see Section~\ref{s3} and Lemma~\ref{lem:coupling-minimal}) and have identical transfer functions.

\end{remark}

In a summary for this section we note that for multiplication-operator L-systems, the c-entropy and dissipation coefficient admit explicit formulas \eqref{entropy-single} and \eqref{diss-single}. Under coupling, c-entropy is additive \eqref{entropy-coupling} and the dissipation coefficient satisfies the nonlinear law \eqref{diss-coupling-general}, with the closed form \eqref{diss-coupling} available in terms of the defining parameters. These explicit expressions form the basis for the quantitative analysis of L-system couplings in the sequel.

\section{Interpolation model $\Theta_\Delta$ and its invariants}\label{s6}

We now arrive at the main result of this paper. In Section~\ref{s3} we introduced the explicit
interpolation system $\Theta_\Delta$ given in \eqref{e-27-Theta}, with transfer function
\eqref{e-28=W}. In Section~\ref{s4} we developed the multiplication operator model and its
coupling, and in Section~\ref{s5} we obtained closed formulas for the c-entropy and
dissipation coefficient in this setting. Bringing these elements together, we can evaluate
the invariants of $\Theta_\Delta$.

The following theorem establishes a closed formula for the c-entropy of the interpolation
system.

\begin{theorem}\label{t-6}
Let $\Theta_\Delta$ be the interpolation L-system of the form \eqref{e-27-Theta}, with transfer
function given by \eqref{e-28=W}.
Then its c-entropy is
\begin{equation}\label{entropy-Delta}
   \cS(\Theta_\Delta) = \frac{1}{2}\sum_{k=1}^m \ln\!\left(
          \frac{(\RE z_k)^2+(1+\IM z_k)^2}
               {(\RE z_k)^2+(1-\IM z_k)^2}
        \right).
\end{equation}
\end{theorem}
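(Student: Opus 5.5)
The plan is to compute $|W_{\Theta_\Delta}(-i)|$ directly from the product formula \eqref{e-28=W} and then apply Definition~\ref{d-9-ent}. Since every node satisfies $\IM z_k>0$, the point $z=-i$ lies in the lower half-plane and is never a node. Hence $-i\in\rho(T_\Delta)$: the spectrum of the upper-triangular matrix $T_\Delta$ in \eqref{99} is exactly $\{z_1,\dots,z_m\}\subset\dC_+$. Evaluating \eqref{e-28=W} at $z=-i$ therefore gives
\[
W_{\Theta_\Delta}(-i)=\prod_{k=1}^m\frac{-i-\bar z_k}{-i-z_k}=\prod_{k=1}^m\frac{\bar z_k+i}{z_k+i},
\]
a finite product with nonzero denominators.

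Next, each factor is handled exactly as in the single multiplication model of Section~\ref{s5}. Writing $z_k=x_k+iy_k$, we have $|\bar z_k+i|^2=x_k^2+(1-y_k)^2$ and $|z_k+i|^2=x_k^2+(1+y_k)^2$. Taking $-\ln$ of the modulus turns the product into a sum:
\[
\cS(\Theta_\Delta)=-\sum_{k=1}^m\ln\left|\frac{\bar z_k+i}{z_k+i}\right|=\frac12\sum_{k=1}^m\ln\frac{x_k^2+(1+y_k)^2}{x_k^2+(1-y_k)^2}.
\]
This is \eqref{entropy-Delta}.

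Alternatively, one can argue structurally. By Remark~\ref{r-5}, $\Theta_\Delta$ is unitarily equivalent to the $m$-fold coupling of multiplication models with $\lambda_0=z_k$. Remark~\ref{r-4} shows that $\cS$ is invariant under unitary equivalence, and additivity of c-entropy under coupling, iterated via \eqref{mult-theorem}, combined with \eqref{entropy-single} yields the same formula. I will present the direct computation as the proof and mention this route as a consistency check.

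The only delicate point is the degenerate case $z_k=i$ for some $k$. Then the factor $\bar z_k+i$ vanishes, so $W_{\Theta_\Delta}(-i)=0$ and the denominator $x_k^2+(1-y_k)^2$ in \eqref{entropy-Delta} is zero. I will state that in this case both sides equal $+\infty$, with the convention $\ln(a/0)=+\infty$ for $a>0$. This is consistent with Definition~\ref{d-9-ent} and with the remark in Section~\ref{s5} that an infinite summand makes the total entropy infinite. No other step presents any obstacle.
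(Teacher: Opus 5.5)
Your proposal is correct and is essentially the paper's proof: evaluate the Blaschke product \eqref{e-28=W} at $z=-i$, compute each factor's modulus as $\sqrt{\bigl(x_k^2+(1-y_k)^2\bigr)/\bigl(x_k^2+(1+y_k)^2\bigr)}$, and let $-\ln$ turn the product into the sum \eqref{entropy-Delta}. Your structural alternative via coupling is what the paper records afterwards in Remark~\ref{r-7}, and your explicit handling of the case $z_k=i$ (both sides equal $+\infty$) is a small addition that the paper defers to Theorem~\ref{t-8}(a).
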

\begin{proof}
Write $z_k = x_k + i y_k$ with $x_k=\RE z_k\in\Bbb R$ and $y_k=\IM z_k>0$.
By Definition \ref{d-9-ent}, the c-entropy is
\[
   \cS(\Theta_\Delta) \;=\; -\ln\!\big|W_{\Theta_\Delta}(-i)\big|
   \qquad\text{(see \eqref{e-80-entropy-def}).}
\]
Using the explicit form of the transfer function of $\Theta_\Delta$ from \eqref{e-28=W}, that is,
\[
   W_{\Theta_\Delta}(z) \;=\; \prod_{k=1}^m \frac{z-\bar z_k}{z-z_k},
\]
we evaluate at $z=-i$:
\[
   \big|W_{\Theta_\Delta}(-i)\big|
   \;=\; \prod_{k=1}^m
          \left|\frac{-i-\bar z_k}{-i-z_k}\right|
   \;=\; \prod_{k=1}^m
          \left|\frac{-i-(x_k-iy_k)}{-i-(x_k+iy_k)}\right|
   \;=\; \prod_{k=1}^m
          \left|\frac{x_k-i(y_k-1)}{x_k+i(1+y_k)}\right|.
\]
Taking absolute values yields,
\[
   \left|\frac{x_k-i(y_k-1)}{x_k+i(1+y_k)}\right|
   \;=\; \sqrt{\frac{x_k^2+(y_k-1)^2}{x_k^2+(1+y_k)^2}}
   \;=\; \sqrt{\frac{(\RE z_k)^2+(1-\IM z_k)^2}{(\RE z_k)^2+(1+\IM z_k)^2}}.
\]
Hence
\[
   \big|W_{\Theta_\Delta}(-i)\big|
   \;=\; \prod_{k=1}^m
          \sqrt{\frac{(\RE z_k)^2+(1-\IM z_k)^2}{(\RE z_k)^2+(1+\IM z_k)^2}}.
\]
Using  properties of the logarithm we obtain
\[
    \begin{aligned}
   \cS(\Theta_\Delta)
   \;&=\; -\sum_{k=1}^m \tfrac12 \ln\!
          \left(\frac{(\RE z_k)^2+(1-\IM z_k)^2}{(\RE z_k)^2+(1+\IM z_k)^2}\right)\\
   \;&=\; \tfrac12\sum_{k=1}^m  \ln\!
          \left(\frac{(\RE z_k)^2+(1+\IM z_k)^2}{(\RE z_k)^2+(1-\IM z_k)^2}\right),
          \end{aligned}
\]
which is precisely \eqref{entropy-Delta}.
\end{proof}

\begin{remark}\label{r-7}
Note that each Blaschke factor
$$
  \frac{\bar z_k - z}{z_k - z}
$$
in \eqref{e-28=W} coincides with the transfer function of the one-dimensional multiplication L-system
\begin{equation}\label{e-32-Theta-k}
  \Theta_k =
   \begin{pmatrix}
      T_k & K_k & 1 \\
      \calH & & \dC
   \end{pmatrix}
  \end{equation}
of the form \eqref{mult-Theta} with parameter $\lambda_0=z_k$ (see the single-model formula \eqref{mult-W} in Section~\ref{s4}). By the Multiplication Theorem, (see Theorem \ref{unitar} in Appendix \ref{A}) coupling these systems $\Theta$ multiplies transfer functions as in \eqref{mult-theorem}:
\[
   W_{\Theta}(z)
   \;=\; \prod_{k=1}^m W_{\Theta_k}(z)= W_{\Theta_\Delta}(z)
   \]
Evaluating at $z=-i$ and using the definition of c-entropy together with the single-model entropy
formula \eqref{entropy-single},
\[
    \begin{aligned}
   \cS(\Theta_\Delta)   \;&=\; -\ln\!\big|W_{\Theta_\Delta}(-i)\big|
   \;=\; -\sum_{k=1}^m \ln\!\big|W_{\Theta_k}(-i)\big|
   \;=\; \sum_{k=1}^m \cS(\Theta_k)\\
   \;&=\; \tfrac12\sum_{k=1}^m  \ln\!\left(\frac{(\RE z_k)^2+(1+\IM z_k)^2}{(\RE z_k)^2+(1-\IM z_k)^2}
        \right).
        \end{aligned}
\]
This reproduces \eqref{entropy-Delta} and explicitly connects the result to the
factorization/coupling mechanism developed in Sections~\ref{s4}-\ref{s5}. Moreover, $\Theta_\Delta$ is minimal (see Section~\ref{s3}), and the coupled system $\Theta=\Theta_1\cdot\ldots\cdot\Theta_m$ is minimal by Lemma~\ref{lem:coupling-minimal}. Hence $\Theta_\Delta$ and $\Theta$ are unitarily equivalent.

\end{remark}
The correspondence between the interpolation system and its coupled multiplication
components established in Remark~\ref{r-7} naturally leads to a characterization of
how the location of the interpolation points affects the c-entropy.
The following theorem formulates these conditions explicitly.

\smallskip
\begin{theorem}
\label{t-8}
Let $\Theta_\Delta$ be the interpolation L-system in the model form \eqref{e-27-Theta}
corresponding to interpolation data $\{z_k\}_{k=1}^m \subset \dC_+$.
Then:
\begin{enumerate}
  \item[\textup{(a)}] The c-entropy $\cS(\Theta_\Delta)$ is \textbf{infinite} if and only if
  at least one node $z_k = i$.
\end{enumerate}
Assuming in addition that all nodes satisfy $z_k\neq i$, we have:
\begin{enumerate}
  \item[\textup{(b)}] The c-entropy $\cS(\Theta_\Delta)$ is \textbf{finite and maximal}
  precisely when all nodes $z_k$ are purely imaginary   $(\RE z_k = 0)$,

  \item[\textup{(c)}] If at least one node has $\RE z_k\neq 0$, then $\cS(\Theta_\Delta)$ is finite but strictly below its maximal value.
\end{enumerate}
\end{theorem}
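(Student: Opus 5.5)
The plan is to work directly from the closed formula \eqref{entropy-Delta} of Theorem~\ref{t-6}. I write $z_k=x_k+iy_k$ with $y_k>0$, so that $\cS(\Theta_\Delta)=\sum_{k=1}^m \cS_k$, where
\[
\cS_k=\tfrac12\ln\frac{x_k^2+(1+y_k)^2}{x_k^2+(1-y_k)^2}.
\]
By Remark~\ref{r-7}, $\cS_k$ is exactly the c-entropy of the one-dimensional multiplication system $\Theta_k$ with $\lambda_0=z_k$. Each summand is nonnegative, because $(1+y_k)^2>(1-y_k)^2$ when $y_k>0$. Hence the whole question reduces to analysing a single summand as a function of $(x_k,y_k)$ and then summing.

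For (a), the numerator $x_k^2+(1+y_k)^2\ge 1$ never vanishes. Therefore $\cS_k=+\infty$ exactly when the denominator vanishes, that is, when $x_k=0$ and $y_k=1$, i.e. $z_k=i$. Otherwise $\cS_k$ is a finite nonnegative number. Since all summands lie in $[0,+\infty]$, the sum is infinite if and only if some summand is infinite. This gives (a). Equivalently, $W_{\Theta_\Delta}(-i)=0$ if and only if the factor $(-i-\bar z_k)$ vanishes for some $k$.

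For (b) and (c), I assume $z_k\ne i$ for all $k$, so every $y_k\neq 1$ whenever $x_k=0$. I must first make precise what ``maximal'' means, and this is the main conceptual obstacle. Over all of $\dC_+$ the entropy is unbounded as $z_k\to i$, so no finite maximum exists. I will therefore read maximality as maximality over all node configurations with prescribed imaginary parts $y_k=\IM z_k$, that is, with the real parts $x_k$ varying freely. With $t=x_k^2\ge0$, $a=(1+y_k)^2$ and $b=(1-y_k)^2$, each summand equals $\tfrac12\ln\phi(t)$ with $\phi(t)=(t+a)/(t+b)$. Then
\[
\phi'(t)=\frac{b-a}{(t+b)^2}=-\frac{4y_k}{(t+b)^2}<0.
\]
So $\phi$ is strictly decreasing on $[0,\infty)$. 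When $y_k\neq1$ it is finite at $t=0$, and its value there is $\cS_k^{\max}=\ln\left|\frac{1+y_k}{1-y_k}\right|$. When $y_k=1$, any $x_k\neq0$ gives a finite value, and the supremum is reached only as $x_k\to0$, which is the excluded node $i$.

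Summing these monotone pieces gives the result. $\cS(\Theta_\Delta)\le\sum_k \ln\left|\frac{1+y_k}{1-y_k}\right|$, with equality if and only if every $x_k=0$. This proves (b), with the explicit maximal value $\sum_k\ln\left|\frac{1+\IM z_k}{1-\IM z_k}\right|$. If some $x_k\neq0$, the strict monotonicity of $\phi$ makes that summand strictly smaller than its value at $x_k=0$, while the other summands do not increase. Hence $\cS(\Theta_\Delta)$ is finite and strictly below the maximal value, which is (c). By Remark~\ref{r-4}, the same conclusions hold for every interpolation system unitarily equivalent to $\Theta_\Delta$.
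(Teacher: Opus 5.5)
Your proof is correct and follows the same route as the paper: split $\cS(\Theta_\Delta)$ into single-node entropies $\cS(\Theta_k)$, analyse each node with $\IM z_k$ fixed, and sum. The paper cites Remark~9 of \cite{BT-23} for the per-node facts, whereas you verify them directly (the numerator/denominator check for (a), and the strict decrease of $\phi(t)=(t+a)/(t+b)$ in $t=(\RE z_k)^2$ for (b)--(c)); this also makes ``maximal'' precise and correctly handles the edge case $\IM z_k=1$, where no finite maximum is attained.
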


\begin{proof}

Let $z_k=a_k+ib_k$ with $b_k>0$. For each $k$ consider the one-dimensional
multiplication-operator L-system $\Theta_k$ with parameter $\lambda_0=z_k$.
Its transfer function is $W_{\Theta_k}(z)=\frac{\bar z_k-z}{z_k-z}$ (see \eqref{mult-W}),
which coincides with the $k$-th Blaschke factor of $W_{\Theta_\Delta}(z)$ in
\eqref{e-28=W}. By the Multiplication Theorem, coupling the systems $\{\Theta_k\}$
produces a canonical L-system $\Theta$ whose transfer function satisfies
\[
W_\Theta(z)=\prod_{k=1}^m W_{\Theta_k}(z)=W_{\Theta_\Delta}(z).
\]
In particular,
\begin{equation}\label{e-33}
\cS(\Theta_\Delta)=-\ln|W_{\Theta_\Delta}(-i)|
=-\sum_{k=1}^m \ln|W_{\Theta_k}(-i)|
=\sum_{k=1}^m \cS(\Theta_k).
\end{equation}
Remark~9 in~\cite{BT-23} asserts that $\cS(\Theta_k)$ is infinite exactly when $z_k=i$,
and that for fixed $b_k>0$ the finite maximum is attained when $\RE z_k=0$.
Part \textup{(a)} follows from \eqref{e-33}: the sum is infinite iff at least one summand $\cS(\Theta_k)$ is infinite, which occurs exactly when  $z_k=i$.

Under the standing assumption $z_k\neq i$ for all $k$, each $\cS(\Theta_k)$ is finite. For fixed $b_k=\IM z_k$, the single-node entropy $\cS(\Theta_k)$ is maximized at $\RE z_k=0$ and strictly decreases when $\RE z_k\neq 0$ by Remark~9 in~\cite{BT-23}. Summing \eqref{e-33} over $k$ yields \textup{(b)} and \textup{(c)}.
\end{proof}

\begin{remark}\label{r-9}
The conditions of Theorem~\ref{t-8} admit a clear geometric interpretation
in the upper half-plane. Each interpolation point $z_k \in \dC_+$ determines a corresponding
parameter $\lambda_k =z_k= a_k + i b_k$ of the multiplication model, where $b_k = \IM \lambda_k > 0$
quantifies the local dissipation. The value of the c-entropy reflects how far the point $z_k$
lies from the imaginary axis:

\begin{itemize}
    \item If $\RE \lambda_k = 0$, then $z_k$ lies on the vertical line through the origin,
    and the corresponding L-system contributes its \emph{maximal finite} share of entropy.
    \item If $\RE \lambda_k \neq 0$, the point $z_k$ departs from this axis,
    the dissipative part decreases, and the contribution to $\cS$ diminishes accordingly.
    \item When $\lambda_k = i$, the interpolation node reaches the extreme point
    of maximal dissipation in $\dC_+$, producing \emph{infinite c-entropy}.
\end{itemize}

Hence, the global c-entropy of $\Theta_\Delta$ is governed by the collective geometry of the
interpolation nodes $\{z_k\}$ in $\dC_+$, with the imaginary parts controlling the magnitude
of dissipation and the real parts determining how close the system is to its maximal or
infinite-entropy regimes.
\end{remark}

\smallskip

The geometric insight of Remark~\ref{r-9} clarifies how the analytic
placement of the interpolation nodes in $\dC_+$ controls both dissipation and entropy.
In the next section, we construct the explicit interpolation model $\Theta_\Delta$
and examine how these geometric and functional invariants are realized in its
operator-theoretic structure.

\medskip
The dissipation coefficient follows immediately from the definition $\cD=1-e^{-2\cS}$.

\begin{theorem}\label{main-thm-D}
For the interpolation L-system $\Theta_\Delta$ the dissipation coefficient is
\begin{equation}\label{diss-Delta}
   \cD(\Theta_\Delta) =
   1 - \prod_{k=1}^m
        \frac{(\RE z_k)^2+(1-\IM z_k)^2}
             {(\RE z_k)^2+(1+\IM z_k)^2}.
\end{equation}
\end{theorem}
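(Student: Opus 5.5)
The plan is to combine Definition~\ref{d-10}, $\cD=1-e^{-2\cS}$, with the closed formula \eqref{entropy-Delta} for $\cS(\Theta_\Delta)$ from Theorem~\ref{t-6}. Equivalently, I will use the first equality in Lemma~\ref{l-1-D-TK}, $\cD=1-|W_{\Theta_\Delta}(-i)|^2$, and work directly with the product form \eqref{e-28=W} of the transfer function. The second route is slightly cleaner because it avoids logarithms altogether.

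First I will write $z_k=x_k+iy_k$ with $y_k>0$. The computation in the proof of Theorem~\ref{t-6} gives
\[
|W_{\Theta_\Delta}(-i)|^2=\prod_{k=1}^m\frac{x_k^2+(1-y_k)^2}{x_k^2+(1+y_k)^2}.
\]
The denominators $x_k^2+(1+y_k)^2$ are strictly positive, so every factor is well defined. Substituting this into $\cD=1-|W_{\Theta_\Delta}(-i)|^2$ yields \eqref{diss-Delta} at once.

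Alternatively, by \eqref{entropy-Delta}, $e^{-2\cS(\Theta_\Delta)}$ is the exponential of minus a sum of logarithms. It is therefore the product of the reciprocals of the ratios appearing there, which is the same product as above.

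The only point needing care is the degenerate case $z_k=i$ for some $k$. There $\cS=+\infty$, so the relation $\cD=1-e^{-2\cS}$ must be read with the convention $e^{-\infty}=0$, giving $\cD=1$. Formula \eqref{diss-Delta} gives the same value, since the factor for $z_k=i$ has numerator $0^2+0^2=0$. Working through Lemma~\ref{l-1-D-TK} avoids this convention entirely, because $|W_{\Theta_\Delta}(-i)|^2$ is finite in every case. I therefore expect no real obstacle; the proof is bookkeeping once Theorem~\ref{t-6} is available. As a consistency check, for $m=2$ the formula reduces to \eqref{diss-coupling-general} via $1-\cD=(1-\cD_1)(1-\cD_2)$, in line with Remark~\ref{r-7}.
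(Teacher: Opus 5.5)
Your proposal is correct and is essentially the paper's proof: the paper exponentiates \eqref{entropy-Delta} to obtain $e^{-2\cS(\Theta_\Delta)}$ as the product of the ratios and subtracts it from $1$. This is the same as your computation of $1-|W_{\Theta_\Delta}(-i)|^2$, and the paper then re-checks the result through the coupling law $1-\cD=\prod_k(1-\cD_k)$, much like your consistency check. Your treatment of the case $z_k=i$ is a useful addition that the paper omits, but note that the first equality in Lemma~\ref{l-1-D-TK} is itself derived from $\cD=1-e^{-2\cS}$, so it relies on the same convention $e^{-\infty}=0$ rather than avoiding it.
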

\begin{proof}
Recall the definition \ref{d-10} of  $\cD=1-e^{-2\cS}$ (see Section~\ref{s2}) and the entropy formula
\eqref{entropy-Delta} established above. Then
\[
   e^{-2\cS(\Theta_\Delta)}
   \;=\; \prod_{k=1}^m
          \frac{(\RE z_k)^2+(1-\IM z_k)^2}{(\RE z_k)^2+(1+\IM z_k)^2},
\]
and hence
\[
   \cD(\Theta_\Delta)
   \;=\; 1 - e^{-2\cS(\Theta_\Delta)}
   \;=\; 1 - \prod_{k=1}^m
          \frac{(\RE z_k)^2+(1-\IM z_k)^2}{(\RE z_k)^2+(1+\IM z_k)^2},
\]
which is precisely \eqref{diss-Delta}.
\end{proof}

We can also note that by Section \ref{s3} we have the Blaschke product representation
\(
   W_{\Theta_\Delta}(z)
   = \prod_{k=1}^m \frac{z-\bar z_k}{z-z_k}
\)
(see \eqref{e-28=W}), realized as a coupling of one-dimensional multiplication L-systems
with parameters $\lambda_0=z_k$ (Section~\ref{s4}, \eqref{mult-W}, \eqref{mult-theorem}). Evaluating c-entropy
for the single model (Section~\ref{s5}) gives
\[
   \cS_k \;=\; \frac{1}{2}\ln\!\left(
          \frac{(\RE z_k)^2+(1+\IM z_k)^2}
               {(\RE z_k)^2+(1-\IM z_k)^2}
        \right)
   \quad\text{and}\quad
   \cD_k \;=\; \frac{4\,\IM z_k}{(\RE z_k)^2+(1+\IM z_k)^2},
\]
cf.\ \eqref{entropy-single}, \eqref{diss-single}. The coupling law for dissipation
(Section~\ref{s5}) states
\[
   \cD \;=\; 1 - \prod_{k=1}^m (1-\cD_k),
   \qquad\text{see }\eqref{diss-coupling-general}.
\]
But for each $k$,
\[
   1-\cD_k
   \;=\; 1 - \frac{4\,\IM z_k}{(\RE z_k)^2+(1+\IM z_k)^2}
   \;=\; \frac{(\RE z_k)^2+(1-\IM z_k)^2}{(\RE z_k)^2+(1+\IM z_k)^2}.
\]
Therefore
\[
   \cD(\Theta_\Delta)
   \;=\; 1 - \prod_{k=1}^m
          \frac{(\RE z_k)^2+(1-\IM z_k)^2}{(\RE z_k)^2+(1+\IM z_k)^2},
\]
which coincides with \eqref{diss-Delta}. \qedhere

\noindent
These formulas express the invariants of the interpolation solution entirely in terms of the
interpolation nodes $\{z_k\}$, confirming the additivity principle of c-entropy under
coupling and its consistency with the multiplicative structure of transfer functions.

It is instructive to examine the special case when all interpolation nodes lie on the imaginary axis.
This situation not only simplifies the general formulas but also illustrates clearly how individual nodes
contribute to the overall entropy and dissipation. In particular, it highlights the additive behavior for
small imaginary parts and the scaling effect when multiple identical nodes are present.
\begin{corollary}\label{c-7}
Assume $z_k = i a_k$ with $a_k>0$ for $k=1,\dots,m$. Then the c-entropy and dissipation coefficient of the interpolation system $\Theta_\Delta$ are
\begin{equation}\label{entropy-Delta-imag}
   \cS(\Theta_\Delta) \;=\; \sum_{k=1}^m \ln\!\left(\frac{1+a_k}{\,1-a_k\,}\right),
\end{equation}
\begin{equation}\label{diss-Delta-imag}
   \cD(\Theta_\Delta) \;=\; 1 \;-\; \prod_{k=1}^m \left(\frac{1-a_k}{\,1+a_k\,}\right)^{\!2}.
\end{equation}
\end{corollary}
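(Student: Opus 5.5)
The plan is to specialize the closed formulas of Theorem~\ref{t-6} and Theorem~\ref{main-thm-D} to the case $\RE z_k=0$, $\IM z_k=a_k$. No new operator-theoretic input is needed: both invariants of $\Theta_\Delta$ depend only on the nodes, through the per-node factors
\[
\frac{(\RE z_k)^2+(1+\IM z_k)^2}{(\RE z_k)^2+(1-\IM z_k)^2}.
\]
Everything therefore reduces to simplifying this factor when $\RE z_k=0$.

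First, substitute $z_k=ia_k$ into \eqref{entropy-Delta}. Each summand becomes
\[
\tfrac12\ln\frac{(1+a_k)^2}{(1-a_k)^2}=\ln\frac{1+a_k}{|1-a_k|}.
\]
Summing over $k$ gives \eqref{entropy-Delta-imag}.

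Here lies the one point needing care, which I expect to be the only real obstacle: the statement writes $\ln\frac{1+a_k}{1-a_k}$ without absolute value. That expression is literally correct only when $0<a_k<1$. For $a_k>1$ one must read the denominator as $|1-a_k|$, which is exactly what the square root in \eqref{entropy-Delta} produces. For $a_k=1$, i.e. $z_k=i$, the summand is $+\infty$, consistent with Theorem~\ref{t-8}(a). I would state this explicitly: either assume $0<a_k<1$, or interpret the formula with $|1-a_k|$. This can be double-checked directly from \eqref{e-28=W}, since
\[
\left|\frac{-i+ia_k}{-i-ia_k}\right|=\frac{|1-a_k|}{1+a_k}.
\]

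Second, for \eqref{diss-Delta-imag}, substitute $z_k=ia_k$ into \eqref{diss-Delta}. The $k$-th factor becomes $(1-a_k)^2/(1+a_k)^2$, which gives the claimed product. Here no sign issue arises because of the square. Alternatively, one can compute $e^{-2\cS}$ from the entropy formula just obtained; it equals $\prod_{k=1}^m (1-a_k)^2/(1+a_k)^2$, so $\cD=1-e^{-2\cS}$ yields the same expression via Definition~\ref{d-10}.

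In the case $a_k=1$ this gives $\cD=1$, matching infinite entropy.
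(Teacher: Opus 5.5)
Your proposal is correct and follows the paper's proof: substitute $\RE z_k=0$, $\IM z_k=a_k$ into \eqref{entropy-Delta} and \eqref{diss-Delta}. Your caveat about $|1-a_k|$ is justified. The paper's proof silently writes $\tfrac12\ln\frac{(1+a_k)^2}{(1-a_k)^2}=\ln\frac{1+a_k}{1-a_k}$, which holds only for $0<a_k<1$, while its own example in Section~\ref{s8} (with $a_k=2,3$) uses $\ln\bigl|\frac{1+a_k}{1-a_k}\bigr|$.
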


\noindent\textit{Proof.}
Substitute $\RE z_k=0$ and $\IM z_k=a_k$ in \eqref{entropy-Delta} and \eqref{diss-Delta}.
Since $(\RE z_k)^2+(1\pm\IM z_k)^2=(1\pm a_k)^2$, we get
\[
\cS(\Theta_\Delta)
= \sum_{k=1}^m \tfrac{1}{2}\ln\!\left(\frac{(1+a_k)^2}{(1-a_k)^2}\right)
= \sum_{k=1}^m \ln\!\left(\frac{1+a_k}{1-a_k}\right),
\]
and then $\cD(\Theta_\Delta)=1-e^{-2\cS(\Theta_\Delta)}$ yields \eqref{diss-Delta-imag}.
\hfill$\Box$

\medskip
\noindent\textbf{Remarks.}
\begin{itemize}
   \item[(i)] When all $a_k$ coincide, say $a_k=a$, the formulas \eqref{entropy-Delta-imag} and \eqref{diss-Delta-imag} reduce to
   \[
      \cS(\Theta_\Delta) = m\,\ln\!\left(\frac{1+a}{1-a}\right), \qquad
      \cD(\Theta_\Delta) = 1 - \left(\frac{1-a}{1+a}\right)^{2m}.
   \]
   This shows that coupling $m$ identical imaginary nodes simply scales the entropy linearly in $m$, while the dissipation coefficient accumulates according to the nonlinear law of Section~\ref{s5}.

   \item[(ii)] For small values $a_k\ll 1$, we may expand around $a_k=0$ to get
   \[
      \ln\!\left(\frac{1+a_k}{1-a_k}\right) = 2a_k + \tfrac{2}{3}a_k^3 + O(a_k^5),
   \]
   so that
   \[
      \cS(\Theta_\Delta) = 2\sum_{k=1}^m a_k + O\!\Big(\sum_{k=1}^m a_k^3\Big),
      \qquad
      \cD(\Theta_\Delta) = 4\sum_{k=1}^m a_k + O\!\Big(\sum_{k=1}^m a_k^2\Big).
   \]

   \smallskip
\noindent
Consequently, the leading-order linear dependence of $S(\Theta_\Delta)$ and $D(\Theta_\Delta)$ on the imaginary parts $a_k$ reflects the weak-dissipation regime of the corresponding multiplication-operator components.
   Thus, in the case of weakly dissipative nodes (small imaginary parts), both invariants behave additively to leading order. This reflects the fact that each interpolation node contributes approximately independently to the overall entropy and dissipation.
\end{itemize}

We next give a precise characterization of the interpolation data corresponding to the minimal regime of the c-entropy.

\medskip

\begin{proposition}\label{p-12}
Let $\{z_k\}_{k=1}^m \subset \dC_+$ be interpolation nodes with $z_k=a_k+ib_k$ and $b_k>0$,
and let $\Theta_\Delta$ be the interpolation L-system of the form \eqref{e-27-Theta} corresponding to
the data $v_k=i$. Then
\begin{equation}\label{eq:S-sum}
\cS(\Theta_\Delta)
= -\ln|W_{\Theta_\Delta}(-i)|
= \frac12\sum_{k=1}^m  \ln\!\left(
\frac{a_k^2+(1+b_k)^2}{a_k^2+(1-b_k)^2}
\right)\ \ge\ 0.
\end{equation}
Moreover:
\begin{enumerate}
\item[\textup{(i)}] The \emph{infimum} of $\cS(\Theta_\Delta)$ over admissible nodes is $0$,
and it is not attained for any $b_k>0$.
\item[\textup{(ii)}] For each $k$,
$\displaystyle  \ln\!\left(\frac{a_k^2+(1+b_k)^2}{a_k^2+(1-b_k)^2}\right)\downarrow 0$
as either $b_k\downarrow 0$ or $|a_k|\to\infty$. Consequently,
$\cS(\Theta_\Delta)\downarrow 0$ precisely when every node approaches the real axis
($b_k\downarrow 0$) or escapes horizontally ($|a_k|\to\infty$).
\end{enumerate}
\end{proposition}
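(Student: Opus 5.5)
The formula \eqref{eq:S-sum} is exactly Theorem~\ref{t-6} written with $z_k=a_k+ib_k$, so it needs no new argument. For nonnegativity, note that each summand has numerator minus denominator
\[
(1+b_k)^2-(1-b_k)^2=4b_k>0 .
\]
Hence each logarithm is strictly positive. Here we use the convention that the summand is $+\infty$ when $z_k=i$, in line with Theorem~\ref{t-8}(a). So $\cS(\Theta_\Delta)>0$ for every admissible configuration, and in particular $\cS(\Theta_\Delta)\ge 0$.

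For (i), strict positivity of every summand already shows that the value $0$ is never attained when all $b_k>0$. It remains to show that $0$ is the infimum. Set
\[
s(a,b)=\ln\frac{a^2+(1+b)^2}{a^2+(1-b)^2},
\]
and rewrite it as $s(a,b)=\ln\bigl(1+\tfrac{4b}{a^2+(1-b)^2}\bigr)$. As $b\downarrow0$ with $a$ fixed, the fraction tends to $0$, since its denominator tends to $a^2+1\ge1$. As $|a|\to\infty$ with $b$ fixed, the fraction also tends to $0$. Choosing all nodes with small $b_k$ (say $b_k=\varepsilon$) therefore gives $\cS(\Theta_\Delta)\le \tfrac m2\ln(1+4\varepsilon/(1-\varepsilon)^2)\to0$, and so the infimum is $0$.

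For (ii), monotone decrease to $0$ needs a derivative check. In $|a|$ the claim is immediate, because $4b/(a^2+(1-b)^2)$ is decreasing in $a^2$. In $b$, for fixed $a$, we have
\[
\frac{d}{db}\,\frac{b}{a^2+(1-b)^2}=\frac{a^2+1-b^2}{(a^2+(1-b)^2)^2},
\]
which is positive for $b<\sqrt{1+a^2}$. So $s$ decreases as $b\downarrow0$ near $0$, which is the regime in question.

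The main obstacle is the word ``precisely'' in the consequence. Since the sum of nonnegative terms tends to $0$ iff each term does, it suffices to show that for a single summand $s(a_k,b_k)\to0$ forces $b_k\to0$ or $|a_k|\to\infty$ along the sequence. I would argue by contrapositive. If $b_k\ge\beta>0$ and $|a_k|\le A$ along a subsequence, there are two cases. If $b_k$ also stays bounded, then the fraction $4b_k/(a_k^2+(1-b_k)^2)$ is bounded below. If $b_k\to\infty$, the fraction behaves like $4/b_k\to0$, so a third escape route ($b_k\to\infty$) must be admitted, or else the statement read as concerning the listed regimes. I would present the equivalence as: $s\to0$ iff $b_k/(a_k^2+(1-b_k)^2)\to0$, and note this covers $b_k\downarrow0$, $|a_k|\to\infty$, and also $b_k\to\infty$. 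The stated conclusion should be phrased as these being the sufficient mechanisms, with the vertical escape $b_k\to\infty$ recorded as an additional case the argument forces.
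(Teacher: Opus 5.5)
Your proof is correct. For everything the paper actually proves, it follows the same route: evaluate the Blaschke factors at $-i$, write each summand as $\ln\bigl(1+\frac{4b_k}{a_k^2+(1-b_k)^2}\bigr)$, and read off positivity and the two limits. You go beyond the paper in three places:
\begin{itemize}
\item Your uniform bound with $b_k=\varepsilon$ proves that the infimum is $0$. The paper's argument for (i) only shows that $\cS=0$ is impossible and leaves the infimum to the limits in (ii).
\item Your derivative $\frac{a^2+1-b^2}{(a^2+(1-b)^2)^2}$ justifies the monotone ``$\downarrow$'', which the paper asserts but never checks.
\item You examine the converse implicit in ``precisely''. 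The paper only says that summing the two limits ``yields the stated characterization'' and never proves the converse.
\end{itemize}

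You are right about that converse, and it is the statement, not your proof, that needs repair. Take $m=1$ and $z_1=ib$ with $b\to\infty$. Then $\cS(\Theta_\Delta)=\ln\frac{b+1}{b-1}\downarrow 0$, yet the node neither approaches $\dR$ nor escapes horizontally. The same claim is repeated in Remark~\ref{r-13}.

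The correct characterization is that $\cS(\Theta_\Delta)\to 0$ if and only if $\frac{b_k}{a_k^2+(1-b_k)^2}\to 0$ for every $k$. Equivalently, $\min\{b_k,|z_k|^{-1}\}\to 0$ for every $k$, i.e.\ every node tends to $\dR\cup\{\infty\}$.

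To make your contrapositive airtight, argue through subsequences:
\begin{itemize}
\item Suppose $\frac{b_k}{a_k^2+(1-b_k)^2}\ge\delta>0$ along a subsequence. Then $\delta(1-b_k)^2\le b_k$ traps $b_k$ in a compact subinterval of $(0,\infty)$, and $\delta a_k^2\le b_k$ bounds $a_k$. So none of the escape routes occurs.
\item Conversely, $b_k\ge\beta>0$ and $|z_k|\le R$ give the lower bound $\frac{\beta}{R^2+(1+R)^2}$.
\item For sufficiency, note that $a_k^2+(1-b_k)^2\ge(|z_k|-1)^2$, because $b_k\le|z_k|$. This handles $|z_k|\to\infty$ in any direction, not just horizontally.
\end{itemize}
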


\begin{proof}
For the model L-system  $\Theta_\Delta$ of the form \eqref{e-27-Theta} described in Section~\ref{s3} (with target values $v_k=i$), the transfer function is given by \eqref{e-28=W}, that is
\[
W_{\Theta_\Delta}(z)=\prod_{k=1}^m\frac{z-\bar z_k}{z-z_k}.
\]
Evaluating at $z=-i$ gives
\[
\left|\frac{-i-\bar z_k}{-i-z_k}\right|
=\left|\frac{a_k-i(1+b_k)}{a_k-i(1-b_k)}\right|
=\sqrt{\frac{a_k^2+(1+b_k)^2}{a_k^2+(1-b_k)^2}}.
\]
Hence
\[
-\ln|W_{\Theta_\Delta}(-i)|
=\sum_{k=1}^m -\ln\left|\frac{-i-\bar z_k}{-i-z_k}\right|
=\sum_{k=1}^m \frac12 \ln\!\left(
\frac{a_k^2+(1+b_k)^2}{a_k^2+(1-b_k)^2}
\right),
\]
confirming \eqref{eq:S-sum}. Since $(1+b_k)^2\ge (1-b_k)^2$ for $b_k>0$, every summand is
nonnegative, so $\cS(\Theta_\Delta)\ge 0$.

If $\cS(\Theta_\Delta)=0$, then each summand must vanish, i.e., $$a_k^2+(1+b_k)^2=a_k^2+(1-b_k)^2,$$ which forces $b_k=0$ for all $k$ but this contradicts $z_k\in\dC_+$ ($b_k>0$). Therefore the infimum over admissible nodes is $0$, but it is not
attained (claim (i)).

In order to prove (ii), fix $a_k$ and let $b_k\downarrow 0$. Then
\[
\frac{a_k^2+(1+b_k)^2}{a_k^2+(1-b_k)^2}\ \longrightarrow\ 1,
\quad\text{hence}\quad
\frac12\ln\!\left(\frac{a_k^2+(1+b_k)^2}{a_k^2+(1-b_k)^2}\right)\ \longrightarrow\ 0.
\]
Alternatively, for fixed $b_k>0$, as $|a_k|\to\infty$ one has
\[
\frac{a_k^2+(1+b_k)^2}{a_k^2+(1-b_k)^2}
=1+\frac{(1+b_k)^2-(1-b_k)^2}{a_k^2+(1-b_k)^2}
\ \longrightarrow\ 1,
\]
and the same logarithmic term tends to $0$. Summing over $k$ yields the stated characterization $\cS(\Theta_\Delta)\downarrow 0$.

Thus, the limit $\cS(\Theta_\Delta)\to 0$ characterizes the \emph{minimal regime} of the interpolation system, realized when the interpolation nodes approach the real axis ($b_k\downarrow 0$) or drift horizontally to infinity ($|a_k|\to\infty$).
\end{proof}

\begin{remark}\label{r-13}
In the special case $v_\ell=i$, the c-entropy admits the factorized form
\[
\cS(\Theta_\Delta)
=\sum_{k=1}^m \ln\left|\frac{z_k+i}{\,z_k-i\,}\right|.
\]
Hence $\cS(\Theta_\Delta)\downarrow 0$ precisely when each node $z_k$ approaches the real axis
($\IM z_k\downarrow 0$) or escapes horizontally ($|\,\RE z_k\,|\to\infty$). Geometrically,
this is the regime where the Blaschke factors $\frac{z-\bar z_k}{z-z_k}$ have modulus
$\big|\frac{-i-\bar z_k}{-i-z_k}\big|\to 1$ at $z=-i$, so that $|W(-i)|\to 1$ and
$-\ln|W(-i)|\to 0$. By contrast, placing a node at $z_k=i$ forces $\cS(\Theta_\Delta)=\infty$, while aligning nodes on the imaginary axis ($\RE z_k=0$ with $z_k\neq i$) yields the \emph{maximal finite} regime discussed earlier.
\end{remark}
These asymptotic observations complete the characterization of all entropy regimes (minimal, maximal, and infinite) and prepare the ground for the geometric interpretation developed in Section~\ref{s8}.

\medskip
The explicit formulas derived above for the c-entropy and dissipation coefficient of the
interpolation system $\Theta_\Delta$, together with the geometric characterization in
Theorem~\ref{t-8} and Remark~\ref{r-9}, reveal how the location of interpolation nodes in
$\dC_+$ governs the transition between finite, maximal, and infinite c-entropy regimes.  In
particular, purely imaginary nodes yield the maximal finite c-entropy, while any horizontal
displacement reduces it, and the critical node $z_k = i$ produces divergence of c-entropy.
These results demonstrate that the invariants $\mathcal{S}$ and $\mathcal{D}$ capture
intrinsic geometric information about the interpolation data, linking analytic placement of
nodes to the dissipative structure of the corresponding L-system.



\bigskip
\noindent

The next section focuses on the special case of symmetric interpolation configurations,
which represent the equilibrium regime of this balance and naturally admit
a physical interpretation in terms of energy exchange within corresponding $LC$-network
analogs.

\section{Symmetric Interpolation, Physical Interpretation, Examples}\label{s7}

We now examine \emph{symmetric interpolation configurations}, in which the interpolation nodes are placed symmetrically with respect to the imaginary axis.
This symmetry plays a distinguished role: it yields explicit closed forms for both the transfer and impedance functions of the corresponding interpolation L-system~$\Theta_\Delta$, and it links the analytic geometry of interpolation data with the energy balance of the associated physical network.

We begin with the simplest nontrivial configuration, a single symmetric pair of nodes.
In this case, the transfer function $W_{\Theta_\Delta}(z)$ and the impedance function $V_{\Theta_\Delta}(z)$ are those of the \emph{canonical interpolation L-system} $\Theta_\Delta$ of the form \eqref{e-27-Theta}, related by the Cayley transform \eqref{e-8-WV}.
The following lemma provides their explicit analytic form and serves as a prototype for the general multi-pair constructions developed later in Theorem~\ref{t:explicitABa}.

\begin{lemma}[Single symmetric pair of interpolation nodes]\label{l:symm-pair-cayley}
Let the interpolation data consist of a symmetric pair of nodes
\[
   z_1 = a + i b, \qquad z_2 = -a + i b,
   \qquad a \ne 0,\; b > 0.
\]
Consider the transfer function $W_{\Theta_\Delta}(z)$ and the impedance function
$V_{\Theta_\Delta}(z)$ of the interpolation L-system~$\Theta_\Delta$  of the form \eqref{e-27-Theta}.
Then
\begin{equation}\label{e:pair-form}
   W_{\Theta_\Delta}(z)
   = \frac{(z-\bar z_1)(z+z_1)}{(z-z_1)(z+\bar z_1)},\qquad
   V_{\Theta_\Delta}(z)
   = \frac{Az}{B^2 - z^2},
\end{equation}
where
\[
   A = 2b,\qquad B = \sqrt{a^2 + b^2}.
\]
In particular, $V_{\Theta_\Delta}(z_k)=i$ for $k=1,2$, and $V_{\Theta_\Delta}(z)$ is a rational Herglotz-Nevanlinna function.
\end{lemma}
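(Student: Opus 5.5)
The plan is to specialize the explicit Blaschke-product formula \eqref{e-28=W} to $m=2$ and then pass to the impedance through the Cayley transform \eqref{e-8-WV}, in the form already recorded in \eqref{e-29-Vz}.

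First, with $z_1=a+ib$ and $z_2=-a+ib$, we have $z_2=-\bar z_1$ and $\bar z_2=-z_1$. Substituting these into \eqref{e-28=W} turns the factors $z-\bar z_2$ and $z-z_2$ into $z+z_1$ and $z+\bar z_1$. This gives the stated form of $W_{\Theta_\Delta}$ at once.

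Second, for $V_{\Theta_\Delta}$, I will use \eqref{e-29-Vz} with $m=2$ and expand the two quadratics. Writing $p(z)=(z-z_1)(z-z_2)$, the roots $z_1,z_2$ have sum $2ib$ and product $z_1z_2=-(a^2+b^2)$, so $p(z)=z^2-2ibz-(a^2+b^2)$. Likewise $\bar p(z)=(z-\bar z_1)(z-\bar z_2)=z^2+2ibz-(a^2+b^2)$. Their difference is $4ibz$ and their sum is $2(z^2-B^2)$, with $B^2=a^2+b^2$. Hence
\[
V_{\Theta_\Delta}(z)=i\,\frac{4ibz}{2(z^2-B^2)}=\frac{2bz}{B^2-z^2},
\]
which is $Az/(B^2-z^2)$ with $A=2b$.

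Third, the interpolation property $V_{\Theta_\Delta}(z_k)=i$ follows directly: $W_{\Theta_\Delta}$ has poles at $z_1,z_2$, and the Cayley transform sends $W=\infty$ to $V=i$. As a check, $z_k^2=a^2-b^2\pm 2iab$ gives $B^2-z_k^2=2b(b\mp ia)$, and indeed $2bz_k=2ib(b\mp ia)$. For the Herglotz–Nevanlinna property, I will use the partial fractions
\[
\frac{Az}{B^2-z^2}=\frac{b}{B-z}+\frac{b}{-B-z},
\]
a sum of terms $\frac{b}{t-z}$ with $b>0$ and real $t$. Each such term has positive imaginary part on $\dC_+$, exactly as in \eqref{mult-V}.

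There is no real obstacle here. The only point needing care is the sign bookkeeping in the Cayley transform, namely confirming that $\prod(z-\bar z_k)$ belongs in the numerator so that the overall sign of $V$ comes out positive. The explicit check $V(z_k)=i$ and the positivity of the partial-fraction residues settle this.
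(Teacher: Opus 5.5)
Your proof is correct. For the two formulas in \eqref{e:pair-form} it is the paper's argument: you specialize the Blaschke product \eqref{e-28=W} using $z_2=-\bar z_1$ and then apply the Cayley transform. Your Vieta expansion simply writes out the algebra that the paper condenses into ``yields''.

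You depart from the paper only in the last two claims. The paper proves the Herglotz--Nevanlinna property by setting $z=x+iy$ and computing $\IM V_{\Theta_\Delta}$ explicitly, which gives \eqref{e:ImV-positive}. It checks $V_{\Theta_\Delta}(z_k)=i$ by direct evaluation. You instead split $V_{\Theta_\Delta}(z)=\frac{b}{B-z}+\frac{b}{-B-z}$, and you obtain the interpolation property from the poles of $W_{\Theta_\Delta}$ at $z_1,z_2$. These poles are genuine, since the zeros $\bar z_1,\,-z_1=\bar z_2$ lie in the lower half-plane.

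Your decomposition is exactly the Nevanlinna representation with measure $b\,\delta_{B}+b\,\delta_{-B}$, so positivity follows at once from the one-node form \eqref{mult-V}. It also matches the operator structure: $\RE T_\Delta=\bigl(\begin{smallmatrix} a & ib\\ -ib & -a\end{smallmatrix}\bigr)$ has eigenvalues $\pm B$, and the weights add up to $\|g_\Delta\|^2=2b$.

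The paper's route buys something different: a closed-form expression for $\IM V_{\Theta_\Delta}(x+iy)$. That formula is reused, in the same or analogous form, in Example~\ref{ex:num7} and in the proof of Corollary~\ref{c:pair-plus-imag}.
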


\begin{proof}
Using the formula \eqref{e-28=W} of the transfer function $W_{\Theta_\Delta}(z)$ for the symmetric pair $(z_1,-\bar z_1)$, we get
\[
   W_{\Theta_\Delta}(z)
   = \frac{(z-\bar z_1)(z+z_1)}{(z-z_1)(z+\bar z_1)}
   = \frac{(z-a+ib)(z+a+ib)}{(z-a-ib)(z+a-ib)}.
\]
Substituting $W_{\Theta_\Delta}(z)$ above into the Cayley transform \eqref{e-8-WV} yields
\[
   V_{\Theta_\Delta}(z)
   = i\,\frac{W_{\Theta_\Delta}(z)-1}{W_{\Theta_\Delta}(z)+1}
   = \frac{2b\,z}{a^2 + b^2 - z^2}
   = \frac{Az}{B^2 - z^2},
\]
with $A=2b$ and $B^2=a^2+b^2$.
To verify the Herglotz-Nevanlinna property, write $z=x+iy$ with $y>0$, and set
$B^2 = a^2+b^2$ and $P = B^2 - x^2 + y^2$. Then
\[
   V_{\Theta_\Delta}(z)
   = A\,\frac{x+iy}{P - 2i x y}
   = A\,\frac{(x+iy)(P+2i x y)}{P^2 + 4x^2y^2}.
\]
A short computation shows
\[
   (x+iy)(P+2i x y)
   = \underbrace{x\bigl(B^2 - x^2 - y^2\bigr)}_{\text{real}}
     \;+\; i\,\underbrace{y\bigl(B^2 + x^2 + y^2\bigr)}_{\text{imag}},
\]
hence
\begin{equation}\label{e:ImV-positive}
   \IM V_{\Theta_\Delta}(x+iy)
   = \frac{2by\,(B^2 + x^2 + y^2)}{(B^2 - x^2 + y^2)^2 + 4x^2y^2}.
   \end{equation}
Since $b>0$, $y>0$, and the denominator is strictly positive, \eqref{e:ImV-positive} is strictly positive.
Therefore $\IM V_{\Theta_\Delta}(z)>0$ for $\IM z>0$, and $V_{\Theta_\Delta}$ is Herglotz-Nevanlinna.
Finally, evaluating at $z=z_1$ and $z=z_2$ gives $V_{\Theta_\Delta}(z_k)=i$, $k=1,2$.
\end{proof}
\smallskip
The explicit computation in Lemma~\ref{l:symm-pair-cayley} shows that a symmetric pair of interpolation nodes produces a rational Herglotz-Nevanlinna impedance function of the form $A\,z/(B^2-z^2)$, which realizes the canonical interpolation L-system $\Theta_\Delta$. Adding further nodes perturbs this structure in a controlled
way, as illustrated next.
\begin{corollary}[One symmetric pair plus one purely imaginary node]\label{c:pair-plus-imag}
Let the interpolation data consist of three nodes
\[
   z_1=a+ib,\qquad z_2=-a+ib,\qquad z_3=ic,
   \qquad a\neq 0,\; b>0,\; c>0.
\]
Let $W_{\Theta_\Delta}(z)$ be the transfer function of the canonical interpolation L-system $\Theta_\Delta$ of the form \eqref{e-27-Theta} for this data and $V_{\Theta_\Delta}(z)$ its impedance function.
Then $V_{\Theta_\Delta}(z)$ has the two-term rational form
\begin{equation}\label{e:two-term-form}
   V_{\Theta_\Delta}(z)
   = -\frac{C}{z}\;+\; \frac{Az}{\widetilde B^{\,2}-z^2},
\end{equation}
where, with $B^2:=a^2+b^2$,
\begin{equation}\label{e:two-term-params}
   \widetilde B^{\,2} \;=\; B^2 + 2bc,
   \qquad
   C \;=\; \frac{c\,B^2}{\widetilde B^{\,2}},
   \qquad
   A \;=\; (2b+c) - C \;=\; (2b+c) - \frac{c\,B^2}{\widetilde B^{\,2}}.
\end{equation}
In particular,
\[
   V_{\Theta_\Delta}(z_k)=i,\qquad k=1,2,3,
\]
and $V_{\Theta_\Delta}$ is Herglotz-Nevanlinna on $\dC_+$.
\end{corollary}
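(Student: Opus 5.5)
The plan is to compute $V_{\Theta_\Delta}$ directly from the Blaschke product \eqref{e-28=W} via the Cayley transform \eqref{e-8-WV}, exactly as in the proof of Lemma~\ref{l:symm-pair-cayley}, and then perform a partial fraction decomposition. Write $P(z)=\prod_{k=1}^3(z-z_k)$ and $Q(z)=\prod_{k=1}^3(z-\bar z_k)$, so that $W_{\Theta_\Delta}=Q/P$ and, as in \eqref{e-29-Vz},
\[
V_{\Theta_\Delta}(z)=i\,\frac{Q(z)-P(z)}{Q(z)+P(z)}.
\]

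\textbf{Step 1: the two polynomials.} For the symmetric pair, the computation of Lemma~\ref{l:symm-pair-cayley} gives
\[
(z-z_1)(z-z_2)=z^2-2ibz-B^2,\qquad (z-\bar z_1)(z-\bar z_2)=z^2+2ibz-B^2 .
\]
The third node contributes the factors $z-ic$ and $z+ic$. Expanding both products and separating even and odd parts in $i$, I expect
\[
Q-P=2i\bigl[(2b+c)z^2-cB^2\bigr],\qquad Q+P=2z\bigl(z^2-B^2-2bc\bigr)=2z\bigl(z^2-\widetilde B^{\,2}\bigr).
\]
This is where $\widetilde B^{\,2}=B^2+2bc$ appears. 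It follows that
\[
V_{\Theta_\Delta}(z)=\frac{(2b+c)z^2-cB^2}{z\,(\widetilde B^{\,2}-z^2)}.
\]

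\textbf{Step 2: partial fractions.} The residue at $z=0$ equals $-cB^2/\widetilde B^{\,2}$, which accounts for the term $-C/z$. Adding $C/z$ back, the remainder is
\[
\frac{(2b+c-C)\,z^2}{z(\widetilde B^{\,2}-z^2)}=\frac{Az}{\widetilde B^{\,2}-z^2},
\]
because $C\widetilde B^{\,2}=cB^2$ cancels the constant term. This yields \eqref{e:two-term-form} with the parameters \eqref{e:two-term-params}.

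\textbf{Step 3: interpolation and Herglotz properties.} At a node $z_k$ we have $P(z_k)=0$ and $Q(z_k)\neq 0$, since $\bar z_j\ne z_k$ for nodes in $\dC_+$. Hence $V_{\Theta_\Delta}(z_k)=i\,Q(z_k)/Q(z_k)=i$, consistent with the general fact $V_{\Theta_\Delta}(z_k)=i$ from Section~\ref{s3}.

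For the Herglotz--Nevanlinna property, I would avoid a direct $\IM$ computation as in \eqref{e:ImV-positive} and instead use a sum of simple fractions. First, $-C/z$ is Herglotz because $C>0$. Second,
\[
\frac{Az}{\widetilde B^{\,2}-z^2}=\frac A2\left(\frac{1}{\widetilde B-z}-\frac{1}{\widetilde B+z}\right)=\frac A2\left(\frac{1}{\widetilde B-z}+\frac{1}{-\widetilde B-z}\right),
\]
and each summand $1/(t-z)$ with $t\in\dR$ has positive imaginary part on $\dC_+$. So this term is Herglotz as soon as $A>0$. The inequality $A>0$ holds because $B^2<\widetilde B^{\,2}$ gives $C<c$, hence $A>2b>0$.

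The only real obstacle is keeping the algebra in Step~1 clean. Organizing it by $(z^2-B^2)(z\pm ic)\pm 2ibz(z\pm ic)$ makes the cancellations transparent.
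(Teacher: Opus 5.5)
Your proof is correct and follows the paper's route. You apply the Cayley transform to the Blaschke product to get $V_{\Theta_\Delta}(z)=\frac{(2b+c)z^2-cB^2}{z(\widetilde B^{\,2}-z^2)}$, decompose it into partial fractions, and use $C>0$ and $A>2b>0$ for the Herglotz--Nevanlinna property. The differences are cosmetic: you read off $C$ as a residue, split $Az/(\widetilde B^{\,2}-z^2)$ into simple fractions $1/(t-z)$, and make the interpolation check explicit via $P(z_k)=0$, where the paper matches coefficients, computes the imaginary part directly, and simply evaluates at the nodes.
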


\begin{proof}
The transfer function is the product of the single-pair factor and the purely imaginary node factor:
\[
   W_{\Theta_\Delta}(z)
   = \underbrace{\frac{(z-\bar z_1)(z+z_1)}{(z-z_1)(z+\bar z_1)}}_{\text{symmetric pair}}
     \cdot
     \underbrace{\frac{z-\bar z_3}{\,z-z_3\,}}_{\text{imaginary node}}
   = \frac{(z-a+ib)(z+a+ib)}{(z-a-ib)(z+a-ib)}\cdot\frac{z+ic}{z-ic}.
\]
Applying the Cayley transform \eqref{e-8-WV} and simplifying gives
\begin{equation}\label{e:pre-PFD}
   V_{\Theta_\Delta}(z)
   = i\,\frac{W_{\Theta_\Delta}(z)-1}{W_{\Theta_\Delta}(z)+1}
   = \frac{a^2c+b^2c-(2b+c)z^2}{\,z\bigl(z^2-(a^2+b^2+2bc)\bigr)}.
\end{equation}
Set $B^2:=a^2+b^2$ and $\widetilde B^{\,2}:=B^2+2bc$; then \eqref{e:pre-PFD} becomes
\[
   V_{\Theta_\Delta}(z)=\frac{cB^2-(2b+c)z^2}{\,z\bigl(z^2-\widetilde B^{\,2}\bigr)}.
\]

We seek a decomposition of the form \eqref{e:two-term-form}.
Bringing the right-hand side over the common denominator $z(\widetilde B^{\,2}-z^2)$ gives
\[
   -\frac{C}{z} + A\,\frac{z}{\widetilde B^{\,2}-z^2}
   = \frac{-C(\widetilde B^{\,2}-z^2)+A z^2}{\,z(\widetilde B^{\,2}-z^2)}
   = -\,\frac{cB^2-(2b+c)z^2}{\,z(\widetilde B^{\,2}-z^2)}.
\]
Equating numerators yields
\[
   -\bigl(cB^2-(2b+c)z^2\bigr)
   = -C\widetilde B^{\,2} + (C+A)z^2.
\]
Matching constant and $z^2$ coefficients gives
\[
   C\,\widetilde B^{\,2}=cB^2 \ \Longrightarrow\
   C=\frac{cB^2}{\widetilde B^{\,2}},
   \qquad
   C+A=2b+c \ \Longrightarrow\
   A=(2b+c)-C,
\]
which proves \eqref{e:two-term-form}-\eqref{e:two-term-params}.

For the Herglotz-Nevanlinna property, note $0<C<c$ since $\widetilde B^{\,2}>B^2$,
and thus $A=(2b+c)-C>2b>0$. For $z=x+iy$ with $y>0$,
\[
   \Im\!\left(-\frac{C}{z}\right) = \frac{Cy}{x^2+y^2}>0,
   \quad
   \Im\!\left(A\,\frac{z}{\widetilde B^{\,2}-z^2}\right)
   = A\,\frac{y\bigl(\widetilde B^{\,2}+x^2+y^2\bigr)}{\bigl(\widetilde B^{\,2}-x^2+y^2\bigr)^2+4x^2y^2}>0,
\]
hence $\Im V_{\Theta_\Delta}(z)>0$ on $\dC_+$. Evaluating at $z=z_1,z_2,ic$ gives $V_{\Theta_\Delta}(z_k)=i$.
\end{proof}

\smallskip
The preceding results, Lemma~\ref{l:symm-pair-cayley} and
Corollary~\ref{c:pair-plus-imag}, reveal a consistent analytic structure.
Each symmetric pair of interpolation nodes contributes a rational
Herglotz--Nevanlinna component of the form $A_k\,z/(B_k^2-z^2)$,
while purely imaginary nodes introduce additional terms of type
$-A_0/z$.
When several symmetric pairs are present, their individual contributions combine additively in the model construction by additive superposition of the pair contributions. These observations motivate the introduction of a single analytic function that captures the overall resonant-capacitive composition of the data. This function, denoted by $V_\Delta(z)$, does not necessarily interpolate the prescribed values $i$ at all nodes for $n>1$, but it reproduces the essential analytic and geometric structure of the corresponding interpolation system and provides a convenient model for the study of symmetric configurations.

\begin{definition}[Model impedance function]\label{d:model-impedance}
Let the interpolation data consist of $n$ symmetric pairs
$(a_k \pm ib_k)$ with $a_k \ne 0$, $b_k > 0$, and $m$ purely imaginary
nodes $i b_j$ with $b_j > 0$.
The associated \emph{model impedance function} is defined by
\begin{equation}\label{e:Vgeneral}
   V_\Delta(z)
   = -\frac{A_0}{z}
     + \sum_{k=1}^{n} A_k\,\frac{z}{B_k^2 - z^2},
   \quad
   A_0 = \sum_{j=1}^{m} b_j,\;
   A_k = 2b_k,\;
   B_k = \sqrt{a_k^2 + b_k^2}.
\end{equation}
Here, the first term represents the collective capacitive contribution of all
purely imaginary nodes, while each summand in the series corresponds to one
symmetric resonant branch of the interpolation L-system.
\end{definition}

\smallskip
The function $V_\Delta(z)$ introduced in Definition~\ref{d:model-impedance} provides an analytic model
for symmetric interpolation configurations. Although it does not, in general, realize the interpolation conditions $V_\Delta(z_k)=i$ for all nodes, it retains the essential analytic structure determined by the symmetric placement of the data. In particular, each pair $(z_k,-\bar z_k)$ contributes a term of the form
$A_k z/(B_k^2-z^2)$, and the total model impedance is obtained by additive coupling of these components.
The following theorem gives the explicit formula for $V_\Delta(z)$ and clarifies its relation to the transfer function of the canonical interpolation L-system~$\Theta_\Delta$.

\begin{theorem}[Symmetric Pairs of Interpolation Nodes]\label{t:explicitABa}
Let the interpolation data consist of $2n$ symmetric nodes in $\dC_+$ satisfying
\[
   z_k = a_k + i b_k,\qquad
   z_{2n+1-k} = -a_k + i b_k,\qquad
   a_k\in\dR,\; b_k>0,\; a_k\ne0.
\]
Then the \emph{model impedance function} $V_\Delta(z)$ is given by
\begin{equation}\label{e:VexplicitA}
   V_\Delta(z)=\sum_{k=1}^{n}A_k\,\frac{z}{B_k^2-z^2},
\end{equation}
where
\begin{equation}\label{e:ABformulaA}
   A_k=2b_k,\qquad
   B_k=\sqrt{a_k^2+b_k^2}>0.
\end{equation}
Each term in \eqref{e:VexplicitA} is a rational Herglotz--Nevanlinna component associated with one symmetric pair of interpolation nodes.
\end{theorem}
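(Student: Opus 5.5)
The plan is to treat the theorem as essentially a specialization of Definition~\ref{d:model-impedance}, combined with Lemma~\ref{l:symm-pair-cayley} applied pair by pair. First I will observe that the data contain no purely imaginary nodes, since every node has $a_k\neq0$. In Definition~\ref{d:model-impedance} this means $m=0$, so the empty sum gives $A_0=0$ and the capacitive term $-A_0/z$ disappears. What remains is exactly \eqref{e:VexplicitA}, with $A_k=2b_k$ and $B_k=\sqrt{a_k^2+b_k^2}$. Here $B_k>0$ holds because $a_k\neq0$ (indeed already because $b_k>0$). The indexing $z_k$, $z_{2n+1-k}$ simply enumerates the pairs $a_k\pm ib_k$ as $k=1,\dots,n$, matching the pairs $(z_k,-\bar z_k)$ used in the definition.

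Second, I will justify the interpretation of each summand and its relation to $\Theta_\Delta$. For each $k$, consider the two-node data $\{z_k,-\bar z_k\}$. Lemma~\ref{l:symm-pair-cayley} shows that the impedance of the corresponding canonical interpolation L-system is $A_kz/(B_k^2-z^2)$, with the same $A_k,B_k$ as in \eqref{e:ABformulaA}. By \eqref{e-28=W} and the Multiplication Theorem \eqref{mult-theorem}, the transfer function of the full $\Theta_\Delta$ factors as
\[
W_{\Theta_\Delta}(z)=\prod_{k=1}^n\frac{(z-\bar z_k)(z+z_k)}{(z-z_k)(z+\bar z_k)},
\]
so each summand of $V_\Delta$ is the Cayley transform of one pair factor. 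I would state explicitly that $V_\Delta$ is the additive superposition of these pair impedances; it is not claimed to equal $V_{\Theta_\Delta}$ when $n>1$, which is consistent with the discussion preceding the definition.

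Third, I will prove the Herglotz--Nevanlinna property of each term using \eqref{e:ImV-positive} from the lemma's proof. That formula gives $\IM\bigl(A_kz/(B_k^2-z^2)\bigr)>0$ for $\IM z>0$. Summing finitely many such terms preserves positivity of the imaginary part, so $V_\Delta$ is itself a rational Herglotz--Nevanlinna function.

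The only real obstacle is conceptual rather than computational. Because $V_\Delta$ is defined rather than derived, one must make clear which statement is actually being proved. I will therefore frame the proof as two parts: the reduction of the definition in the absence of imaginary nodes, and the pairwise Cayley computation.
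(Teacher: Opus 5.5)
Your proposal is correct and follows essentially the same route as the paper. The paper's proof likewise obtains each summand $A_k z/(B_k^2-z^2)$ as the Cayley transform of the pair factor $\frac{(z-\bar z_k)(z+z_k)}{(z-z_k)(z+\bar z_k)}$ (as in Lemma~\ref{l:symm-pair-cayley}), sums these components by Definition~\ref{d:model-impedance} with no purely imaginary nodes, and gets the Herglotz--Nevanlinna property of $V_\Delta$ from that of each term, where you make $A_0=0$ and the positivity via \eqref{e:ImV-positive} explicit.
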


\begin{proof}
For each symmetric pair $(z_k,-\bar z_k)$ with $z_k=a_k+ib_k$ and $b_k>0$, the transfer factor is (see Lemma \ref{l:symm-pair-cayley} and \eqref{e:pair-form})
\[
   W_k(z)=\frac{(z-\bar z_k)(z+z_k)}{(z-z_k)(z+\bar z_k)}.
\]
Applying the Cayley transform \eqref{e-8-WV} to $W_k$ gives the
Herglotz--Nevanlinna component
\[
   V_k(z)= i\,\frac{W_k(z)-1}{W_k(z)+1}
          = \frac{2b_k\,z}{a_k^2+b_k^2-z^2}
          = A_k\,\frac{z}{B_k^2-z^2},
\]
with $A_k=2b_k$ and $B_k=\sqrt{a_k^2+b_k^2}$.
By Definition~\ref{d:model-impedance} (with no purely imaginary nodes),
the model impedance for $n$ symmetric pairs is the sum of these components:
\[
   V_\Delta(z)=\sum_{k=1}^{n} V_k(z)
              =\sum_{k=1}^{n} A_k\,\frac{z}{B_k^2-z^2},
\]
which is precisely the formula \eqref{e:VexplicitA}. Each $V_k$ belongs to the Herglotz--Nevanlinna class and satisfies $V_k(z_k)=i$ at its interpolation nodes. Hence $V_\Delta(z)$, being their sum, also belongs to the
Herglotz--Nevanlinna class.
\end{proof}
The corresponding transfer function of the canonical interpolation L-system~$\Theta_\Delta$ is
\begin{equation}\label{e-49-sym-W}
   W_{\Theta_\Delta}(z)
   =\prod_{k=1}^{n}
     \frac{(z-\bar z_k)(z+z_k)}{(z-z_k)(z+\bar z_k)}.
\end{equation}
This product representation contrasts with the additive structure of the model impedance function~\eqref{e:VexplicitA}.
The following remark clarifies the analytic relationship between the two
and explains how the model form $V_\Delta(z)$ relates to the canonical
pair $(V_{\Theta_\Delta}(z),W_{\Theta_\Delta}(z))$.

\begin{remark}[Cayley structure of the model function]\label{r:Cayley-structure}
The impedance function $V_{\Theta_\Delta}(z)$ and the transfer function $W_{\Theta_\Delta}(z)$ of the canonical interpolation L-system are linked by the Cayley transform~\eqref{e-8-WV}. In contrast, the model impedance function $V_\Delta(z)$ of Theorem~\ref{t:explicitABa} possesses an additive representation that is not obtained by applying this transform directly to the product form \eqref{e-49-sym-W}. Instead, $V_\Delta(z)$ serves as a simplified analytic realization built by additive coupling of the individual symmetric-pair components $V_k(z)=\dfrac{2b_k\,z}{B_k^2-z^2}$. It reproduces the global resonant-capacitive structure of the interpolation data, although for $n>1$ it does not, in general, realize the exact Nevanlinna--Pick values $V_\Delta(z_k)=i$ satisfied by $V_{\Theta_\Delta}(z)$.
\end{remark}

\smallskip
The next step is to extend this construction to configurations that include purely imaginary interpolation nodes not paired by symmetry. Such nodes represent the limiting case of vanishing real parts in the symmetric setting and contribute additional terms of capacitive type to the model impedance function. The following theorem provides the explicit analytic form of $V_\Delta(z)$ in this degenerate situation.

\begin{theorem}[Purely Imaginary Interpolation Nodes]\label{t:explicitABb}
Suppose the interpolation data contain $m$ purely imaginary nodes
\[
   z_j = i\,b_j,\qquad b_j>0,
\]
without symmetric counterparts. Then the associated model impedance function $V_0(z)$, which is Herglotz--Nevanlinna and satisfies $V_0(z_j)=i$, has the form
\begin{equation}\label{e:V0}
   V_0(z)
   = -\sum_{j=1}^{m}\frac{A_{0j}}{z},\qquad
     A_{0j}=b_j.
\end{equation}
Each term in~\eqref{e:V0} represents the degenerate limit of a symmetric pair
with vanishing real part $a_k\to0$,
a term of purely capacitive type in the sense of the subsequent
physical interpretation (see Remark~\ref{r:LC}).
\end{theorem}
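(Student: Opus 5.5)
The plan is to follow the proof of Theorem~\ref{t:explicitABa} step by step, with the symmetric-pair factor replaced by the single Blaschke factor of a purely imaginary node. For one node $z_j=ib_j$, the factor in \eqref{e-28=W} is $W_j(z)=\frac{z+ib_j}{z-ib_j}$. This is also the transfer function \eqref{mult-W} of the one-dimensional multiplication model with $\lambda_0=ib_j$. I will substitute it into the Cayley transform \eqref{e-8-WV}. Since $W_j-1=\frac{2ib_j}{z-ib_j}$ and $W_j+1=\frac{2z}{z-ib_j}$, this gives
\[
V_j(z)=i\,\frac{W_j(z)-1}{W_j(z)+1}=i\cdot\frac{2ib_j}{2z}=-\frac{b_j}{z}.
\]
As a cross-check, \eqref{mult-V} gives the same result, because $\RE\lambda_0=0$ and $\IM\lambda_0=b_j$. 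Evaluating at the node gives $V_j(ib_j)=-b_j/(ib_j)=i$. So each component interpolates the value $i$ at its own node.

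Next I will define $V_0$ by the additive superposition prescribed in Definition~\ref{d:model-impedance}, taking no symmetric pairs. This yields $V_0(z)=-\sum_j b_j/z$ with $A_{0j}=b_j$, which is \eqref{e:V0}. For the Herglotz--Nevanlinna property I will write $z=x+iy$ with $y>0$. Then $\IM(-A/z)=Ay/(x^2+y^2)>0$ for every $A>0$, and a finite sum of such terms keeps positive imaginary part. This is the same estimate already used for the $-C/z$ term in Corollary~\ref{c:pair-plus-imag}.

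The main obstacle is the claim ``$V_0(z_j)=i$''. Under the additive model definition, $V_0(ib_\ell)=i\,\bigl(\sum_j b_j\bigr)/b_\ell$, which equals $i$ only when $m=1$. I would therefore prove the interpolation property in the same sense as in Theorem~\ref{t:explicitABa} and Remark~\ref{r:Cayley-structure}. That is, each component satisfies $V_j(z_j)=i$, and $V_0$ itself interpolates exactly for a single node. In that case $V_0=V_{\Theta_\Delta}$ by \eqref{e-29-Vz}.

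Finally, I will justify the ``degenerate limit'' interpretation by comparison with Lemma~\ref{l:symm-pair-cayley} and Corollary~\ref{c:pair-plus-imag}. Setting $a=0$ there collapses the pair onto the imaginary axis. The capacitive term $-C/z$ with $C\to c$ appears exactly as the contribution of the imaginary node. This identifies the $-b_j/z$ terms as the capacitive-type components referred to in Remark~\ref{r:LC}.
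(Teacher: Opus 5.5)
Your derivation of $V_j(z)=-b_j/z$ from the Blaschke factor $\frac{z+ib_j}{z-ib_j}$ via the Cayley transform, followed by additive superposition as in Definition~\ref{d:model-impedance}, is exactly the paper's proof. Your Herglotz--Nevanlinna check and the componentwise verification $V_j(ib_j)=i$ are correct additions. Your objection to the claim $V_0(z_j)=i$ is also correct: $V_0(ib_\ell)=i\sum_j b_j/b_\ell\neq i$ for $m\ge 2$. For comparison, the genuine canonical impedance for two imaginary nodes is $(b_1+b_2)z/(b_1b_2-z^2)$, which is resonant rather than capacitive. The paper's proof never checks the interpolation property, so restricting it to the componentwise and $m=1$ cases, in the spirit of Remark~\ref{r:Cayley-structure}, is the right call.

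The step that does not work is your justification of the degenerate-limit sentence. In Corollary~\ref{c:pair-plus-imag}, setting $a=0$ gives $C=cb^2/(b^2+2bc)=cb/(b+2c)$, not $c$. One has $C\to c$ only when $2bc/(a^2+b^2)\to 0$, e.g.\ as $|a|\to\infty$. That is the pair moving away from the imaginary axis and decoupling, not collapsing onto it. Similarly, $a=0$ in Lemma~\ref{l:symm-pair-cayley} gives $2bz/(b^2-z^2)$ (a double node at $ib$), which still has poles at $\pm b$ and is not of the form $-b/z$. So the limit of $A_kz/(B_k^2-z^2)$ as $a_k\to 0$ is $2b_kz/(b_k^2-z^2)$, and that sentence of the statement cannot be proved as a literal limit in $a_k$; the paper's proof only asserts it. A capacitive term $-A_k/z$ does arise as the limit $B_k\to 0$ with $A_k$ fixed (infinite inductance $L_k=A_k/B_k^2$). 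However, that limit is incompatible with $A_k=2b_k$ and $B_k\ge b_k$. You should therefore present this part as a network-theoretic interpretation rather than derive it from $a=0$.
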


\begin{proof}
For a single purely imaginary node $z_j = i\,b_j$ with $b_j>0$, the corresponding
transfer function of the canonical interpolation L-system is
\[
   W_j(z)
   = \frac{z - \bar z_j}{z - z_j}
   = \frac{z + i b_j}{z - i b_j}.
\]
Applying the Cayley transform~\eqref{e-8-WV} gives
\[
   V_j(z)
   = i\,\frac{W_j(z)-1}{W_j(z)+1}
   = -\,\frac{b_j}{z}.
\]
Summing over all $m$ purely imaginary nodes yields
\[
   V_0(z)
   = -\sum_{j=1}^{m}\frac{b_j}{z},
\]
which coincides with~\eqref{e:V0}.
Each term $-b_j/z$ represents the degenerate limit of the symmetric-pair
expression $A_k\,z/(B_k^2-z^2)$ as the real part $a_k\to0$,
a term of purely capacitive type in the sense of the subsequent
physical interpretation (see Remark~\ref{r:LC}).
\end{proof}

\smallskip
Combining the results of Theorem~\ref{t:explicitABa} and
Theorem~\ref{t:explicitABb} yields the general analytic form of the
model impedance function for symmetric interpolation configurations
that may include both symmetric pairs and purely imaginary nodes.
The resulting expression unifies the resonant and capacitive components
into a single rational Herglotz--Nevanlinna function, as stated below.

\begin{corollary}[General Symmetric Configuration]\label{c:Vgeneral}
Let the interpolation data consist of $n$ symmetric pairs
$(a_k\pm i b_k)$ with $a_k\ne0$, $b_k>0$, and $m$ purely imaginary
nodes $i\,b_j$ with $b_j>0$.
Then the combined model impedance function is
\begin{equation}\label{e:Vgeneral-final}
   V_\Delta(z)
   = -\frac{A_0}{z}
     + \sum_{k=1}^{n} A_k\,\frac{z}{B_k^2 - z^2},
\end{equation}
where
\[
   A_0 = \sum_{j=1}^{m} b_j,\qquad
   A_k = 2b_k,\qquad
   B_k = \sqrt{a_k^2+b_k^2}>0.
\]
The first term in~\eqref{e:Vgeneral-final} represents the collective
contribution of all purely imaginary nodes, while each summand in the
series corresponds to one symmetric resonant branch of the model
impedance function.
Together these terms form a rational Herglotz--Nevanlinna function that
encodes the resonant--capacitive structure of the interpolation data.
(The canonical impedance $V_{\Theta_\Delta}(z)$ obtained via
\eqref{e-8-WV} satisfies $V_{\Theta_\Delta}(z_k)=i$ for all nodes.)
\end{corollary}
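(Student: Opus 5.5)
The plan is to assemble the corollary from the two preceding theorems, using the additive definition of the model impedance function (Definition~\ref{d:model-impedance}). Split the data into two groups: the $n$ symmetric pairs $(z_k,-\bar z_k)$ with $z_k=a_k+ib_k$, $a_k\neq0$, and the $m$ purely imaginary nodes $ib_j$.

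For the first group, Theorem~\ref{t:explicitABa} gives the resonant part
\[
\sum_{k=1}^n A_k\,\frac{z}{B_k^2-z^2},\qquad A_k=2b_k,\quad B_k=\sqrt{a_k^2+b_k^2}.
\]
Recall how it arises: one applies the Cayley transform \eqref{e-8-WV} to each pair factor $W_k$, as in Lemma~\ref{l:symm-pair-cayley}, and then sums.

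For the second group, Theorem~\ref{t:explicitABb} gives the capacitive part
\[
V_0(z)=-\sum_{j=1}^m \frac{b_j}{z}=-\frac{A_0}{z},\qquad A_0=\sum_{j=1}^m b_j.
\]
Since $V_\Delta$ is by definition the additive superposition of the components attached to the individual pairs and imaginary nodes, adding the two parts yields exactly \eqref{e:Vgeneral-final}. At this level the formula is essentially a restatement of \eqref{e:Vgeneral}. One point needs care: the combined model is \emph{not} obtained by applying the Cayley transform to the full product \eqref{e-49-sym-W}, and Remark~\ref{r:Cayley-structure} warns about exactly this. So I will insist that the identity is read through the definition of $V_\Delta$, not through $W_{\Theta_\Delta}$.

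Next comes the Herglotz--Nevanlinna property. Here I verify positivity of the imaginary part termwise for $z=x+iy$, $y>0$. For the capacitive term,
\[
\IM\!\left(-\frac{A_0}{z}\right)=\frac{A_0y}{x^2+y^2}\ge0 .
\]
For each resonant term, the computation \eqref{e:ImV-positive} from Lemma~\ref{l:symm-pair-cayley} gives
\[
\IM\frac{A_kz}{B_k^2-z^2}=\frac{A_ky\,(B_k^2+x^2+y^2)}{(B_k^2-x^2+y^2)^2+4x^2y^2}>0 .
\]
The denominator never vanishes in $\dC_+$ because the poles $\pm B_k$ and $0$ are real. A finite sum of functions holomorphic in $\dC_+$ with nonnegative imaginary part, at least one of them strictly positive, is again Herglotz--Nevanlinna. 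Rationality is clear.

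Finally, the parenthetical claim about $V_{\Theta_\Delta}$ is separate. I would dispatch it by noting that $W_{\Theta_\Delta}$ in \eqref{e-28=W} has a pole at every node $z_k$. Consequently $W_{\Theta_\Delta}(z)\to\infty$ as $z\to z_k$, and the Cayley formula $V=i(W-1)/(W+1)$ gives $V_{\Theta_\Delta}(z_k)=i$, exactly as observed after \eqref{e-29-Vz}.

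The only step I expect to require any thought is keeping the logical status of the model function straight, i.e.\ making clear the corollary is about the additive model and not the Cayley image of the product, together with the degenerate case $n=0$ or $m=0$. Those cases reduce respectively to Theorem~\ref{t:explicitABb} and Theorem~\ref{t:explicitABa}, with the empty sum equal to zero.
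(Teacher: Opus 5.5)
Your proposal is correct and matches the paper's proof, which obtains \eqref{e:Vgeneral-final} by superposing the symmetric-pair part from Theorem~\ref{t:explicitABa} and the capacitive part from Theorem~\ref{t:explicitABb}. Your termwise check that $\IM V_\Delta>0$ on $\dC_+$ and your pole argument for $V_{\Theta_\Delta}(z_k)=i$ supply details that the paper's one-line proof leaves implicit, and both are accurate: the zeros of $W_{\Theta_\Delta}$ lie in the lower half-plane, so no zero cancels a pole at a node.
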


\begin{proof}
The expression~\eqref{e:Vgeneral-final} follows directly by superposition of the symmetric-pair representation established in Theorem~\ref{t:explicitABa} and the purely imaginary contribution described in Theorem~\ref{t:explicitABb}.
\end{proof}

\smallskip
Theorems~\ref{t:explicitABa}--\ref{t:explicitABb}, together with Corollary~\ref{c:Vgeneral}, provide a complete analytic description of symmetric interpolation systems, encompassing both resonant (sym\-metric-pair) and purely reactive (imaginary-node) configurations. Beyond their analytic meaning, these results also admit a direct
\emph{physical interpretation} in terms of classical network synthesis, where each term in~\eqref{e:Vgeneral-final} corresponds to a distinct component of a passive $LC$ circuit. The following remark makes this correspondence explicit and explains how the interpolation data determine the energy storage and exchange properties of the associated system.


\begin{remark}[Physical interpretation]\label{r:LC}
The \emph{model impedance function} $V_\Delta(z)$ introduced in
Corollary~\ref{c:Vgeneral},
\[
   V_\Delta(z)
   = -\frac{A_0}{z}
     + \sum_{k=1}^n A_k\,\frac{z}{B_k^2 - z^2},
\]
admits a direct interpretation within the framework of classical network
synthesis, where the analytic terms correspond to the capacitive and
resonant elements of a passive electrical circuit.
To make this correspondence explicit, it is convenient to rewrite
$V_\Delta(z)$ in the real-frequency variable $p=-iz$,
so that
\[
   Z(p)
   = \frac{1}{i}\,V_\Delta(i p)
   = \frac{A_0}{p}
     + \sum_{k=1}^n A_k\,\frac{p}{B_k^2 + p^2},
\]
a rational positive-real function analytic in the right half-plane
$\Re p>0$ and real on the positive real axis
(see Remark~1 of~\cite{BT-23}).
Consequently, $Z(p)$ represents the impedance of a passive one-port
electrical network consisting of a capacitor
$C_0 = 1/A_0$ in series with $n$ parallel $LC$ branches whose
inductances and capacitances are given by
\[
   L_k = \frac{A_k}{B_k^2}, \qquad
   C_k = \frac{1}{A_k}, \qquad k=1,\dots,n.
\]

Each branch represents one symmetric pair of interpolation nodes $(z_k,-\bar z_k)$ with parameters
$A_k=2b_k$ and $B_k=\sqrt{a_k^2+b_k^2}$ (see~\eqref{e:ABformulaA}), while the term $A_0/z$ arises from the collection of purely imaginary nodes $z_j=i b_j$.

The symmetry of the interpolation data ensures reciprocity and
lossless energy exchange between the inductive and capacitive components,
whereas any deviation from symmetry ($a_k\neq0$) introduces controlled
dissipation quantified by the c-entropy and dissipation coefficient.
Hence, each interpolation pair contributes a resonant $LC$ channel whose
strength and resonance frequency are determined by $A_k$ and $B_k$, respectively.
The analytic model $V_\Delta(z)$ thus provides a direct bridge between the
geometry of the interpolation data and the physical theory of passive
positive-real networks, linking the operator-theoretic structure of the
interpolation L-system~$\Theta_\Delta$ with the classical
$LC$-network synthesis framework
(see, e.g.,~\cite{EfimPotap73}).
\end{remark}

\begin{figure}[h!]
    \centering
    \includegraphics[width=0.8\linewidth]{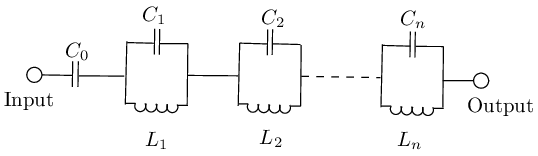}
    \caption{Equivalent electrical network corresponding to the impedance function
    $Z(p)=\frac{A_0}{p}+\sum_{k=1}^n A_k\,\frac{p}{B_k^2+p^2}$.}
    \label{fig:LCnetwork}
\end{figure}

The correspondence between the analytic impedance function $V_\Delta(z)$ and its physical network realization is illustrated schematically in Figure~\ref{fig:LCnetwork}. Each pair of symmetric interpolation nodes $\{z_k, -\bar z_k\}$ gives rise to one resonant $LC$ branch of the circuit. The network consists of $n$ parallel $LC$ branches having inductances $L_k = A_k/B_k^2$ and capacitances $C_k = 1/A_k$ for $k=1,\dots,n$. Each branch corresponds to a symmetric pair $(z_k,-\bar z_k)$ with parameters $A_k=2b_k$ and $B_k^2=a_k^2+b_k^2$, which determine the branch strength and resonance frequency. In addition, the single capacitor $C_0 = 1/A_0$ shown in Figure~\ref{fig:LCnetwork} represents the collective contribution of all purely imaginary nodes, providing the baseline capacitive channel responsible for static energy storage and overall charge balance in the system.
Together these elements form the complete passive one-port network associated with the model impedance function $V_\Delta(z)$.

\smallskip
\noindent
The representation~\eqref{e:Vgeneral} and its network realization shown in Figure~\ref{fig:LCnetwork} illustrate how the analytic geometry of the interpolation nodes governs the energy distribution within the corresponding L-system.
Each symmetric pair $(z_k,-\bar z_k)$ defines a resonant channel of reversible energy exchange, while the purely imaginary nodes produce the static component responsible for baseline capacitive storage. Together these elements form a balanced structure whose entropy and dissipation characteristics are determined entirely by the placement of
the interpolation data. This correspondence establishes a direct bridge between the analytic geometry of symmetric interpolation configurations and the physical mechanisms of energy flow in the associated L-system.

\medskip
\noindent
\textbf{Examples.}
To conclude this section, we illustrate the analytic and physical structures developed above by evaluating the model impedance function for concrete interpolation data. The first example treats the simplest nontrivial symmetric configuration, while the second adds a purely imaginary node to the same symmetric pair,
demonstrating the combined resonant-capacitive character of the resulting system and its influence on the associated entropy and dissipation invariants.

\begin{example}[Symmetric case: finite c-entropy]\label{ex:num7}
Consider the symmetric interpolation data consisting of two nodes
\[
z_1 = 1 + 2i, \qquad z_2 = -1 + 2i,
\]
so that $a_1 = 1$ and $b_1 = 2$.
According to Lemma \ref{l:symm-pair-cayley}, the corresponding parameters of the impedance function are
\[
A = 2b = 4,
\qquad
B = \sqrt{a^2 + b^2} = \sqrt{5}.
\]
Hence, the impedance function of the interpolation L-system $\Theta_\Delta$ is
\begin{equation}\label{e:exV}
V_{\Theta_\Delta}(z) = \frac{4z}{5 - z^2}.
\end{equation}
A quick check shows that $V_{\Theta_\Delta}(\pm 1 + 2i) = i$, confirming the interpolation property.
Moreover, for $z = x + i y$ with $y>0$, the imaginary part is
\[
\IM V_{\Theta_\Delta}(z)
= \frac{4y(5+x^2+y^2)}{(5-x^2-y^2)^2 + 4x^2y^2} > 0,
\]
so that $V_{\Theta_\Delta}(z)$ is indeed a Herglotz-Nevanlinna function.
Evaluating at $z=i$ gives $V_{\Theta_\Delta}(i) = \frac{2}{3}i$,
placing $V_{\Theta_\Delta}(z)$ in the bounded Donoghue class
$\widehat{\sM_\kappa}$ with $\kappa = \frac{1}{5}$ (see~\cite[Section~3]{BT-23}).

The corresponding transfer function, obtained via the Cayley transform, is
\[
W_{\Theta_\Delta}(z)
= \frac{1 - iV_{\Theta_\Delta}(z)}{1 + iV_{\Theta_\Delta}(z)}
   = \frac{3 - z^2 - 4i z}{3 - z^2 + 4i z},
\]
which coincides with the product representation
\[
W_{\Theta_\Delta}(z)
= \frac{(z - \bar z_1)(z + z_1)}{(z - z_1)(z + \bar z_1)}
   = \frac{(z - 1 + 2i)(z + 1 + 2i)}{(z - 1 - 2i)(z + 1 - 2i)}.
\]
This confirms perfect agreement with the general formula~\eqref{e-28=W}.

Finally, the c-entropy and dissipation coefficient computed from
\eqref{e:exV} via formulas~\eqref{e-69-ent-dis} yield
\[
\calS = \ln 5 \approx 1.6094, \qquad
\calD = 1 - e^{-2\calS} = \frac{24}{25} = 0.96,
\]
demonstrating that the symmetric pair of nodes produces an
interpolation L-system of finite c-entropy corresponding to the
bounded Donoghue class $\widehat{\sM_{\frac{1}{5}}}$.
\end{example}

\begin{example}[Mixed configuration: symmetric pair plus one purely imaginary node]\label{ex:num8}
Consider the interpolation data
\[
   z_1 = 1 + 2i,\qquad
   z_2 = -1 + 2i,\qquad
   z_3 = 3i .
\]
By Corollary~\ref{c:pair-plus-imag} (with $a=1$, $b=2$, $c=3$), we have
\[
   B^2 = a^2+b^2 = 5,\;
   \widetilde B^{\,2} = B^2 + 2bc = 17,\;
   C = \frac{cB^2}{\widetilde B^{\,2}} = \frac{15}{17},\;
   A = (2b+c) - C = \frac{104}{17},
\]
and hence the (canonical) impedance function is
\[
   V_{\Theta_\Delta}(z)
   = -\frac{C}{z} \;+\; A\,\frac{z}{\widetilde B^{\,2}-z^2}
   = -\frac{15}{17}\,\frac{1}{z} \;+\; \frac{104}{17}\,\frac{z}{17 - z^2}.
\]
A direct substitution shows that
\[
   V_{\Theta_\Delta}(1+2i)=V_{\Theta_\Delta}(-1+2i)=V_{\Theta_\Delta}(3i)=i,
\]
confirming the interpolation property.

The transfer function obtained via the Cayley transform is
\[
   W_{\Theta_\Delta}(z)=\frac{1 - iV_{\Theta_\Delta}(z)}{1 + iV_{\Theta_\Delta}(z)}.
\]
Evaluating at $z=-i$ yields
\[
   V_{\Theta_\Delta}(-i) \;=\; -\frac{11}{9}\,i,
   \qquad
   W_{\Theta_\Delta}(-i) \;=\; -\frac{1}{10},
   \qquad
   \bigl|W_{\Theta_\Delta}(-i)\bigr| \;=\; \frac{1}{10}.
\]
Therefore the c-entropy and dissipation coefficient are
\[
   \mathcal{S} \;=\; -\ln\!\bigl|W_{\Theta_\Delta}(-i)\bigr|
   = \ln 10 \;\approx\; 2.3026,
   \qquad
   \mathcal{D} \;=\; 1 - e^{-2\mathcal{S}}
   = 1 - 10^{-2} \;=\; 0.99.
\]
Compared with the purely symmetric case of Example~\ref{ex:num7}
(where $\mathcal{S}=\ln 5$ and $\mathcal{D}=24/25$), the additional purely
imaginary node increases both the c-entropy and the dissipation.
\end{example}

\medskip
Having established the analytic and physical structure of symmetric interpolation systems,
we now turn to the explicit \emph{operator realization} of the model $\Theta_\Delta$,
where these abstract constructions are implemented concretely in terms of
state-space operators and their spectral properties.


\section{Operator Realization of the Model $\Theta_\Delta$}\label{s8}

The interpolation model $\Theta_\Delta$ occupies a central position in our analysis,
serving as the bridge between the analytic characterization of interpolation data and
the operator-theoretic realization of their dissipative structure.

Building on the classification of entropy regimes obtained in Section~\ref{s6},
we now turn to the operator model $\Theta_\Delta$ introduced in Section~\ref{s3}
and examine its concrete realization and invariant properties.
Below we present a complete worked example that connects all main constructions
introduced in Sections~\ref{s3}-\ref{s5}.

Consider the interpolation data
\[
   z_1=2i,\qquad z_2=3i,\qquad v_1=v_2=i,
\]
and explicitly build the corresponding Pick-form interpolation system $\vec\Theta$,
the canonical model $\Theta_\Delta$, and the coupled one-dimensional multiplication model
$\Theta=\Theta_1\!\cdot\!\Theta_2$.
We then compute their transfer functions, verify that they coincide, and evaluate
the $c$-entropy and dissipation coefficients to confirm the additivity and invariance
properties established earlier.

\subsection{Setup of the interpolation data}\label{ss-81}

The Nevanlinna-Pick data $(z_k,v_k)\in\dC_+\times\dC_+$ for $k=1,2$
determine the Pick matrices from \eqref{PQ}
\[
  \mathbb{P}_{j,k}=\frac{v_k-\bar v_j}{z_k-\bar z_j}, \qquad
  \mathbb{Q}_{j,k}=\frac{z_kv_k-\bar z_j\bar v_j}{z_k-\bar z_j},
\]
which are strictly positive in this example.
All subsequent constructions will use these matrices
to generate self-adjoint, dissipative, and coupled system realizations.

\subsection{Pick-form interpolation system $\vec\Theta$}\label{ss-82}
We begin with the construction of the interpolation L-system $\vec{\Theta}$ in the Pick form,
directly corresponding to the given interpolation data $(z_k,v_k)$.
This realization serves as the analytic starting point for comparing transfer and impedance
characteristics across the different system representations considered below.
For \(z_1=2i,\,z_2=3i\) and \(v_1=v_2=i\) (so \(\bar v_1=\bar v_2=-i\)), the denominators are
\[
  z_1-\bar z_1=4i,\;\; z_2-\bar z_2=6i,\;\; z_1-\bar z_2=5i,\;\; z_2-\bar z_1=5i,
\]
and \(v_k-\bar v_j=2i\) for all \(j,k\). Hence
\[
  \mathbb{P}
  =\begin{pmatrix}
      \dfrac{2i}{4i} & \dfrac{2i}{5i}\\[6pt]
      \dfrac{2i}{5i} & \dfrac{2i}{6i}
    \end{pmatrix}
  =\begin{pmatrix}
      \tfrac12 & \tfrac{2}{5}\\[3pt]
      \tfrac{2}{5} & \tfrac13
    \end{pmatrix},
  \qquad
  \det\mathbb{P}=\frac{1}{150}>0.
\]
For \(\mathbb Q\), note \(z_k v_k= -\IM z_k\) and \(\bar z_j\,\bar v_j= -\IM z_j\), so the numerators are
\(\IM z_j-\IM z_k\). Thus
\[
  \mathbb{Q}
  =\begin{pmatrix}
      0 & \dfrac{2-3}{\,5i\,}\\[6pt]
      \dfrac{3-2}{\,5i\,} & 0
    \end{pmatrix}
  =\begin{pmatrix}
      0 & \dfrac{i}{5}\\[3pt]
      -\dfrac{i}{5} & 0
    \end{pmatrix}.
\]

{Let us find the state space and inner product.}
We work in \(\dC^2\) with the weighted inner product \((\xi,\eta)_{\dC^2_{\mathbb P}}=\eta^*\mathbb P\,\xi\).
Let
\[
  \vec A = \mathbb P^{-1}\mathbb Q,
  \qquad
  \varphi=\begin{pmatrix} v_1\\  v_2\end{pmatrix}=
          \begin{pmatrix}i\\ i\end{pmatrix},
  \qquad
  \vec g = \mathbb P^{-1}\varphi.
\]
Since
\(
  \mathbb P^{-1}
  =
  \begin{pmatrix}
   50 & -60\\
   -60 & 75
  \end{pmatrix},
\)
we get
\[
  \vec g
  = \begin{pmatrix}50 & -60\\ -60 & 75\end{pmatrix}
    \begin{pmatrix}i\\ i\end{pmatrix}
  = \begin{pmatrix}-10i\\[3pt] 15i\end{pmatrix},
  \qquad
  \vec A
  = \mathbb P^{-1}\mathbb Q
  = \begin{pmatrix}
      12i & 10i\\[2pt]
      -15i & -12i
    \end{pmatrix}.
\]
One readily checks \(\vec A\) is self-adjoint with respect to \((\cdot,\cdot)_{\dC^2_{\mathbb P}}\)
since \(\mathbb Q=\mathbb Q^*\).

Define
\[
  \vec T = \vec A + i(\cdot,\vec g)_{\dC^2_{\mathbb P}}\,\vec g
          \;=\; \vec A + i\,\vec g\,(\vec g^*\mathbb P),
\qquad
  \vec K c = c\,\vec g,\;\; c\in\dC.
\]
Because \(\vec g^*\mathbb P=[\,-i\;\; -i\,]\), the rank-one part is
\[
  i\,\vec g\,(\vec g^*\mathbb P)
  = i\begin{pmatrix}-10i\\ 15i\end{pmatrix}\![\,-i\;\; -i\,]
  = \begin{pmatrix}-10i & -10i\\[2pt] 15i & 15i\end{pmatrix}.
\]
Therefore
\[
  \vec T
  = \begin{pmatrix}
      12i & 10i\\[1pt]
      -15i & -12i
    \end{pmatrix}
    +
    \begin{pmatrix}-10i & -10i\\[1pt] 15i & 15i\end{pmatrix}
  = \begin{pmatrix} 2i & 0\\[1pt] 0 & 3i \end{pmatrix}.
\]
Thus
\[
\vec T=\begin{pmatrix}2i&0\\[1pt]0&3i\end{pmatrix}.
\]
By construction,
\[
\vec T=\vec A+i(\cdot,\vec g)_{\mathbb C^2_{\mathbb P}}\vec g,
\]
so that $\RE \vec T=\vec A$ and $\Im \vec T=\vec K\vec K^*$ in the Hilbert space
$\mathbb C^2_{\mathbb P}$. In particular, $\vec T$ is dissipative.

We have constructed the interpolation system \(\vec\Theta\) that is the Liv\v{s}ic canonical scattering system in the Pick form
\[
  \vec\Theta
  = \begin{pmatrix}
      \vec T & \vec K & 1\\
      \dC^2_{\mathbb P} & & \dC
    \end{pmatrix}
  \quad\text{with}\quad
  \vec T=\begin{pmatrix}2i&0\\[1pt]0&3i\end{pmatrix},\;\;
  \vec K c = c\begin{pmatrix}10i\\[1pt]-15i\end{pmatrix}.
\]

As in \eqref{50}, the impedance function is
\[
  V_{\vec\Theta}(z)=\vec K^*(\vec A - zI)^{-1}\vec K
  = (\cdot,\vec g)_{\dC^2_{\mathbb P}}\big((\vec A - zI)^{-1}\vec g\big).
\]
From \((\vec A - z_k I)e_k=\vec g\) (which holds here for \(k=1,2\)), we obtain
\[
  V_{\vec\Theta}(z_k)=(e_k,\vec g)_{\dC^2_{\mathbb P}}
  = \vec g^*\mathbb P\,e_k
  = i, \qquad k=1,2.
\]
Thus \(\vec\Theta\) solves the Nevanlinna-Pick interpolation problem for the data
\((z_1,z_2)=(2i,3i)\) and \((v_1,v_2)=(i,i)\). The transfer function is the Blaschke product
\[
  W_{\vec\Theta}(z)
  = \prod_{k=1}^2\frac{z-\bar z_k}{z-z_k}
  = \frac{z+2i}{z-2i}\cdot\frac{z+3i}{z-3i},
\]
and the corresponding impedance function equals
\(
  V_{\vec\Theta}(z)= i\,\dfrac{W_{\vec\Theta}(z)-1}{W_{\vec\Theta}(z)+1}
\),
with \(V_{\vec\Theta}(2i)=V_{\vec\Theta}(3i)=i\).

\subsection{Canonical model system $\Theta_\Delta$}\label{ss-83}
Next, we construct the canonical model system $\Theta_\Delta$ associated with the same
interpolation data.  This model provides a concrete operator realization that is unitarily
equivalent to $\vec{\Theta}$, sharing the same transfer function while offering a more
transparent matrix representation of its internal structure.

For the model L-system $\Delta$ in Section \ref{s3} (see \eqref{99}-\eqref{e-27-Theta}), with $m=2$ and
$z_1=2i$ (so $\IM z_1=2$), $z_2=3i$ (so $\IM z_2=3$), we have
\[
  T_\Delta
  = \begin{pmatrix}
      z_1 & 2i\,\sqrt{\IM z_1}\,\sqrt{\IM z_2}\\[3pt]
      0   & z_2
    \end{pmatrix}
  = \begin{pmatrix}
      2i & 2i \sqrt{2}\sqrt{3}\\[3pt]
      0  & 3i
    \end{pmatrix}
  = \begin{pmatrix}
      2i & 2\sqrt{6}\,i\\[3pt]
      0  & 3i
    \end{pmatrix},
\]
and
\[
  g_\Delta
  = \begin{pmatrix}\sqrt{\IM z_1}\\ \\ \sqrt{\IM z_2}\end{pmatrix}
  = \begin{pmatrix}\sqrt{2}\\ \\ \sqrt{3}\end{pmatrix}.
\]
Define $K_\Delta(c)=c\,g_\Delta$. Then $\IM T_\Delta=(\cdot,g_\Delta)\,g_\Delta$ and
\[
  \Theta_\Delta
  = \begin{pmatrix}
      T_\Delta & K_\Delta & 1\\
      \dC^2 & & \dC
    \end{pmatrix}
\]
is the Liv\v{s}ic scattering system in the model (Pick) form.

Let's find the transfer function $W_{\Theta_\Delta}(z)$. By the general formula in Section~\ref{s3} (see \eqref{e-28=W}),
\[
  W_{\Theta_\Delta}(z)
  = 1 - 2i\,( (T_\Delta - zI)^{-1}g_\Delta,\,g_\Delta )
  = \prod_{k=1}^2 \frac{z-\bar z_k}{z-z_k}
  = \frac{z+2i}{z-2i}\cdot\frac{z+3i}{z-3i}.
\]
Comparing with subsection \ref{ss-82}, we found for the Pick-form realization \(\vec\Theta\) that
\[
  W_{\vec\Theta}(z)
  = \frac{z+2i}{z-2i}\cdot\frac{z+3i}{z-3i}.
\]
Hence, as expected,
\[
  W_{\Theta_\Delta}(z)\equiv W_{\vec\Theta}(z),
\]
so the model system $\Theta_\Delta$ and the Pick-form realization $\vec\Theta$ have the same
transfer function (and therefore the same impedance function
\(V=i\frac{W-1}{W+1}\), with \(V(2i)=V(3i)=i\)).
Since both realizations are minimal (see Section~\ref{s3}) and have identical transfer functions, the canonical model and the Pick-form realization are unitarily equivalent.

\subsection{Coupled multiplication-operator system $\Theta$}\label{ss-84}
Finally, we consider two one-dimensional multiplication-operator L-systems $\Theta_1$ and $\Theta_2$. By Lemma~\ref{lem:coupling-minimal}, their coupling $\Theta=\Theta_1\!\cdot\!\Theta_2$ is minimal. Since it has the same transfer function as $\Theta_\Delta$, the two systems are unitarily equivalent.

This construction illustrates how the abstract coupling framework reproduces the
same transfer behavior as the interpolation and model systems, thereby confirming
the invariance of the associated c-entropy and dissipation coefficient.

We work on the one-dimensional state space $\calH=\dC$ with the normalized basis vector $h_0=1$.
For a multiplication-operator L-system with parameter $\lambda_0\in\dC_+$ we take
\[
   T h=\lambda_0 h,\qquad
   Kc=(\sqrt{\IM\lambda_0}\,c)\,h_0=\sqrt{\IM\lambda_0}\,c,\qquad c\in\dC.
\]
Then $K^*h=(h,\sqrt{\IM\lambda_0})=\sqrt{\IM\lambda_0}\,h$, so that $KK^*=(\IM\lambda_0)I$ and
$\IM T = (\IM\lambda_0)I = KK^*$.

Based on our data set we construct two one-dimensional multiplication L-systems. For $z_1=2i$ and $z_2=3i$ we set
\[
    \begin{aligned}
   \Theta_1&=
   \begin{pmatrix}
      T_1 & K_1 & 1\\
      \dC   &    & \dC
   \end{pmatrix},\quad
   T_1( h ) = 2i\, h,\quad K_1 c = \sqrt{2}\,c,
   \\
   \Theta_2&=
   \begin{pmatrix}
      T_2 & K_2 & 1\\
      \dC   &    & \dC
   \end{pmatrix},\quad
   T_2( h ) = 3i\, h,\quad K_2 c = \sqrt{3}\,c.
   \end{aligned}
\]
Their transfer functions (cf.\ Section~4, \eqref{mult-W}) are
\[
   W_{\Theta_1}(z)=\frac{\bar{2i}-z}{2i-z}=\frac{z+2i}{z-2i},
   \qquad
   W_{\Theta_2}(z)=\frac{\bar{3i}-z}{3i-z}=\frac{z+3i}{z-3i}.
\]

Following Section~4 (see \eqref{coupling}-\eqref{T-K-coupling}), the \emph{coupled} system
$\Theta=\Theta_1\cdot\Theta_2$ acts on $\calH\oplus\calH=\dC^2$ and is defined by
\[
   \Theta=
   \begin{pmatrix}
      \mathbf{T} & \mathbf{K} & 1\\
      \dC^2       &            & \dC
   \end{pmatrix},
   \qquad
   \mathbf{T}=
   \begin{pmatrix}
      T_1 & 2i K_1 K_2^*\\[2pt]
      0   & T_2
   \end{pmatrix},
   \qquad
   \mathbf{K}=\begin{pmatrix}K_1\\ K_2\end{pmatrix}.
\]
Since $K_1K_2^*$ is multiplication by $\sqrt{2}\,\sqrt{3}=\sqrt{6}$ on $\dC$, we get
\[
   \mathbf{T}=
   \begin{pmatrix}
      2i & 2i\sqrt{6}\\[2pt]
      0  & 3i
   \end{pmatrix},
   \qquad
   \mathbf{K} c = \begin{pmatrix}\sqrt{2}\\ \sqrt{3}\end{pmatrix}c,\quad c\in\dC.
\]
A direct computation gives
\[
   \IM \mathbf{T}
   = \frac{1}{2i}(\mathbf{T}-\mathbf{T}^*)
   = \begin{pmatrix} 2 & 0\\ 0 & 3\end{pmatrix}
   = \mathbf{K}\,\mathbf{K}^*,
\]
so $\Theta$ is again a canonical dissipative L-system.

By multiplicativity of transfer functions for the coupling (Section~\ref{s4}),
\[
   W_{\Theta}(z)=W_{\Theta_1}(z)\,W_{\Theta_2}(z)
   = \frac{z+2i}{z-2i}\cdot\frac{z+3i}{z-3i}.
\]
This matches exactly the transfer function obtained in Step~1 for the Pick-form realization
$\vec\Theta$ and in Step~2 for the model system $\Theta_\Delta$:
\[
   W_{\Theta}(z) \equiv W_{\vec\Theta}(z) \equiv W_{\Theta_\Delta}(z)
   = \frac{z+2i}{z-2i}\cdot\frac{z+3i}{z-3i}.
\]
Consequently, all three realizations are (unitarily) equivalent at the level of transfer data,
and thus have the same impedance function $V(z)=i\frac{W(z)-1}{W(z)+1}$ and the same invariants
$c$-entropy and dissipation coefficient.

\subsection{c-Entropy and dissipation coefficients}\label{ss-85}
Having established the three equivalent realizations---the interpolation system
$\vec{\Theta}$, the model system $\Theta_\Delta$, and the coupled system
$\Theta=\Theta_1\!\cdot\!\Theta_2$---we now turn to the computation of their intrinsic
quantitative invariants.  Specifically, we evaluate the c-entropy $\mathcal{S}$ and the
dissipation coefficient $\mathcal{D}$ for each realization and verify the additivity and
composition laws derived earlier.  The results confirm that these quantities are invariant
under unitary equivalence and depend solely on the geometric placement of the interpolation
nodes, thus providing a concrete illustration of the intrinsic nature of entropy and
dissipation in the theory of L-systems.

We collect the invariants for:
\[
\Theta_1\ \ (\lambda_0=2i),\qquad
\Theta_2\ \ (\lambda_0=3i),\qquad
\Theta=\Theta_1\cdot\Theta_2,\qquad
\vec\Theta,\qquad
\Theta_\Delta.
\]

For a one-dimensional multiplication model (Section~\ref{s4}) the formulas are
\[
   \cS(\lambda_0)=\tfrac{1}{2}\ln\!\frac{(\RE\lambda_0)^2+(1+\IM\lambda_0)^2}
                                     {(\RE\lambda_0)^2+(1-\IM\lambda_0)^2},
   \qquad
   \cD(\lambda_0)=\frac{4\,\IM\lambda_0}{(\RE\lambda_0)^2+(1+\IM\lambda_0)^2}.
\]
With $\lambda_0=2i$ and $\mu_0=3i$ we obtain
\[
\begin{aligned}
  \cS_1&=\cS(\Theta_1)=\tfrac{1}{2}\ln\frac{(1+2)^2}{(1-2)^2}
         \;=\;\ln\!\left|\frac{1+2}{1-2}\right|
         \;=\;\ln 3,
  &\qquad \cD_1&=\frac{4\cdot 2}{(1+2)^2}=\frac{8}{9},\\[2mm]
  \cS_2&=\cS(\Theta_2)=\tfrac{1}{2}\ln\frac{(1+3)^2}{(1-3)^2}
         \;=\;\ln\!\left|\frac{1+3}{1-3}\right|
         \;=\;\ln 2,
  &\qquad \cD_2&=\frac{4\cdot 3}{(1+3)^2}=\frac{12}{16}=\frac{3}{4}.
\end{aligned}
\]

Additivity of c-entropy and the dissipation coupling law give
\[
   \cS(\Theta)=\cS_1+\cS_2=\ln 3+\ln 2=\ln 6,
   \qquad
   1-\cD(\Theta)=(1-\cD_1)(1-\cD_2),
\]
hence
\[
   \cD(\Theta)
   \;=\;\cD_1+\cD_2-\cD_1\cD_2
   \;=\;\frac{8}{9}+\frac{3}{4}-\frac{8}{9}\cdot\frac{3}{4}
   \;=\;\frac{35}{36}.
\]

For Pick-form L-system $\vec\Theta$ and model system $\Theta_\Delta$ from the above we have
\[
   W_{\vec\Theta}(z)\equiv W_{\Theta_\Delta}(z)
   \equiv W_{\Theta}(z)
   \;=\;\frac{z+2i}{z-2i}\cdot\frac{z+3i}{z-3i}.
\]
Therefore all three realizations have the same invariants:
\[
    \begin{aligned}
   \cS(\vec\Theta)&=\cS(\Theta_\Delta)=\cS(\Theta)
   \;=\;-\ln\!\big|W_\Theta(-i)\big|
   \;=\;-\ln\!\left|\frac{-i+2i}{-i-2i}\cdot\frac{-i+3i}{-i-3i}\right|\\
   \;&=\;-\ln\!\left|\frac{i}{\,-3i\,}\cdot\frac{2i}{\,-4i\,}\right|
   \;=\;-\ln\!\left|\frac{1}{6}\right|
   \;=\;\ln 6,
   \end{aligned}
\]
and, using $1-\cD=\prod_k (1-\cD_k)$ (or directly from $W$),
\[
   \cD(\vec\Theta)=\cD(\Theta_\Delta)=\cD(\Theta)=\frac{35}{36}.
\]
We can summarize all this data in Table \ref{t-1}.

\paragraph{\textbf{Numerical values.}}
\[
   \cS_1=\ln 3\approx 1.098612,\;
   \cS_2=\ln 2\approx 0.693147,\;
   \cS(\text{any of }\Theta,\vec\Theta,\Theta_\Delta)=\ln 6\approx 1.791759,
\]
\[
   \cD_1=\frac{8}{9}\approx 0.8888889,\quad
   \cD_2=\frac{3}{4}=0.75,\quad
   \cD(\text{any of }\Theta,\vec\Theta,\Theta_\Delta)=\frac{35}{36}\approx 0.9722222.
\]

\begin{table}[h!]\label{t-1}
\centering
\renewcommand{\arraystretch}{1.2}
\begin{tabular}{lcc}
\toprule
L-system & $c$-Entropy $\cS$ & Dissipation $\cD$ \\ \midrule
$\Theta_1$ ($2i$) & $\ln 3$ & $8/9$ \\
$\Theta_2$ ($3i$) & $\ln 2$ & $3/4$ \\
Coupling $\Theta=\Theta_1\!\cdot\!\Theta_2$ & $\ln 6$ & $35/36$ \\
Pick-form system $\vec\Theta$ & $\ln 6$ & $35/36$ \\
Model system $\Theta_\Delta$ & $\ln 6$ & $35/36$ \\ \bottomrule \\
\end{tabular}
\caption{Invariants of all realizations for the two-node interpolation data.}
\end{table}

Note that all constructions yield the same transfer function
$$
W(z)=\frac{z+2i}{z-2i}\cdot\frac{z+3i}{z-3i},
$$
and consequently the same invariants.
This confirms that both $c$-entropy and the dissipation coefficient are
\emph{intrinsic invariants} of the interpolation data.
The example numerically verifies the additivity law
\(
\cS=\cS_1+\cS_2
\)
and the nonlinear coupling law
\(
\cD=\cD_1+\cD_2-\cD_1\cD_2,
\)
demonstrating consistency between the Pick-form, canonical, and coupled formulations
of L-systems.

In addition, we see that although $\Theta_\Delta$ is not literally the coupling of the one-node models, it is unitarily equivalent to such a coupling, since both realizations are minimal and share the same transfer function. Since $\cS$ and $\cD$ are defined via $|W(-i)|$ and are invariant under unitary equivalence,
their values are \emph{intrinsic} to the interpolation data. The additivity of $\cS$
and the nonlinear law $\cD=\cD_1+\cD_2-\cD_1\cD_2$ are verified exactly in this example.

In conclusion, the explicit constructions and computations presented in this section confirm
that the interpolation system $\vec{\Theta}$, the model system $\Theta_\Delta$, and the
coupled multiplication system $\Theta=\Theta_1\!\cdot\!\Theta_2$ are equivalent not only in
their transfer and impedance characteristics but also in their intrinsic invariants.  The
calculated values of the c-entropy and dissipation coefficient coincide across all
realizations, providing concrete numerical evidence for the theoretical results of
Sections~\ref{s6} and~\ref{s8}.  These findings demonstrate that the invariants
$\mathcal{S}$ and $\mathcal{D}$ depend exclusively on the geometric placement of the
interpolation nodes in $\dC_+$, with maximal finite c-entropy achieved for purely imaginary
nodes.  Thus, the examples substantiate the intrinsic character of these quantities and
establish a direct correspondence between the analytic structure of the interpolation data
and the dynamical properties of the associated L-systems.

\section{Concluding Remarks}\label{s9}

In this paper, we followed a unified framework linking the Nevanlinna-Pick interpolation problem with the theory of L-systems that was developed in \cite{ABT}.  Explicit formulas were derived for the c-entropy and dissipation coefficient, together with precise conditions under which the c-entropy is finite, maximal, or infinite.  These quantities were shown to be intrinsic invariants determined solely by the geometric placement of interpolation nodes in~$\dC_+$.

The interpolation model~$\Theta_\Delta$ constructed here provides an operator realization that reproduces the analytic data of the interpolation problem while preserving its internal dissipative structure.  The accompanying examples confirm that the same transfer behavior and invariant measures of c-entropy and dissipation are shared by the Pick-form, canonical, and coupled realizations.

Special attention was given to symmetric interpolation configurations, for which the
impedance function admits an explicit rational representation and a direct physical
interpretation in terms of equivalent $LC$-networks.  This correspondence illustrates how
the geometric symmetry of interpolation data translates into balanced energy exchange
within the associated L-system.

Overall, the results demonstrate that the c-entropy and dissipation coefficient provide
effective quantitative bridges between analytic interpolation data and operator-theoretic
dissipation models, revealing a concrete link between system geometry, entropy, and
physical energy balance.

\bigskip


\textbf{Data sharing.}  Data sharing is not applicable to this article as no datasets were generated or analyzed during the current study.

\textbf{Funding declaration.}  No funding was received for conducting this study or to assist with the preparation of this manuscript.

\appendix
\section{The Multiplication Theorem }\label{A}
\begin{theorem}[cf. \cite{Bro}]\label{unitar}
 Assume that $\Theta$ is the  L-system (operator) coupling of the form \eqref{coupling}-\eqref{diss-coupling-general} referred to in Section \ref{s4}.

 Then \begin{equation}\label{e-91-mult}
W_\Theta(z)=W_{\Theta_1}(z)\cdot W_{\Theta_2}(z),\quad
z\in\rho(T_1)\cap\rho(T_2).
\end{equation}
\end{theorem}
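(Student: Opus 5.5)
The plan is to compute the resolvent of the upper-triangular operator $\mathbf T$ from \eqref{T-K-coupling} explicitly and substitute it into the definition \eqref{e6-3-3} with $J=1$. For $z\in\rho(T_1)\cap\rho(T_2)$, write $R_j(z)=(T_j-zI)^{-1}$, $j=1,2$. Since $\mathbf T-zI$ is block upper triangular with invertible diagonal blocks, it is invertible, and
\[
(\mathbf T-zI)^{-1}=\begin{pmatrix} R_1(z) & -2i\,R_1(z)K_1K_2^*R_2(z)\\ 0 & R_2(z)\end{pmatrix}.
\]
This is checked by direct multiplication. In particular $\rho(T_1)\cap\rho(T_2)\subseteq\rho(\mathbf T)$, so \eqref{e-91-mult} makes sense on the stated set.

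Next, apply this to $\mathbf K=\begin{pmatrix}K_1\\K_2\end{pmatrix}$ and pair with $\mathbf K^*=(K_1^*\ \ K_2^*)$. This gives
\[
\mathbf K^*(\mathbf T-zI)^{-1}\mathbf K
= K_1^*R_1K_1+K_2^*R_2K_2-2i\,(K_1^*R_1K_1)(K_2^*R_2K_2).
\]
Here all the quantities $K_j^*R_jK_j$ are scalars, since $E=\dC$, so they commute. Setting $w_j=K_j^*R_jK_j$, we have $W_{\Theta_j}(z)=1-2iw_j$. Then
\[
W_\Theta(z)=1-2i(w_1+w_2)+(2i)^2w_1w_2=(1-2iw_1)(1-2iw_2)=W_{\Theta_1}(z)W_{\Theta_2}(z).
\]

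In the general (non-scalar) setting the same factorization holds without commutativity, because the cross term appears in the order $K_1^*R_1K_1\,K_2^*R_2K_2$. That is exactly the order produced by expanding $(I-2iK_1^*R_1K_1J)(I-2iK_2^*R_2K_2J)$ when $J=1$. For completeness I would also remark that $\IM\mathbf T=\mathbf K\mathbf K^*$, which holds because the off-diagonal entry $2iK_1K_2^*$ contributes $K_1K_2^*$ and $K_2K_1^*$ to $\frac{1}{2i}(\mathbf T-\mathbf T^*)$. This confirms that $\Theta$ is a genuine L-system, although the identity itself is purely algebraic.

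The only step requiring care is the off-diagonal block of the resolvent: getting its sign and factor of $2i$ right so that the cross term produces $(2i)^2w_1w_2=-4w_1w_2$, which matches the expansion of the product. Everything else is routine substitution.
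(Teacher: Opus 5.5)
Your proof is correct, but it takes a different and more general route than the paper's. The paper fixes $T_1h=\lambda_0h$ and $T_2h=\mu_0h$ on one-dimensional spaces. It inverts the explicit $2\times 2$ matrix $\mathbf T-zI$ and computes $2i\mathbf K^*(\mathbf T-zI)^{-1}\mathbf K$ as a rational function of $z$. It then simplifies the numerator of $W_\Theta(z)$ to $(\bar\lambda_0-z)(\bar\mu_0-z)$, which uses $2i\,\IM\lambda_0=\lambda_0-\bar\lambda_0$ (that is, the normalization $K_jK_j^*=\IM T_j$), and finally recognizes the two factors via \eqref{mult-W}.

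You instead use the abstract triangular block inverse and factor at the level of the scalars $w_j=K_j^*(T_j-zI)^{-1}K_j$. So you never need explicit formulas for $W_{\Theta_j}$, nor the relation $\IM T_j=K_jK_j^*$. The identity becomes a purely algebraic consequence of the off-diagonal entry $2iK_1K_2^*$.

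Your route buys generality: it holds for arbitrary bounded main operators on state spaces of any dimension. The paper actually needs this when it applies the theorem to iterated couplings $\Theta^{(k)}=\Theta^{(k-1)}\cdot\Theta_k$ (as in Lemma~\ref{lem:coupling-minimal}) in Remark~\ref{r-7} and the proof of Theorem~\ref{t-8} for $m>2$. There the first factor is no longer a one-dimensional multiplication model, so the appendix computation does not literally cover that case, while yours does verbatim. The paper's computation, in return, produces the explicit Blaschke form $\frac{(\bar\lambda_0-z)(\bar\mu_0-z)}{(\lambda_0-z)(\mu_0-z)}$ directly; in your approach this follows only after combining the identity with \eqref{mult-W}.
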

\begin{proof} Suppose, for definiteness,
that    $T_1$ and $T_2$ are  multiplication operators in $\calH$ of the form \eqref{mult-T} given by
 \begin{equation*}
    T_1 h=\lambda_0 h\quad \textrm{and}\quad T_2 h=\mu_0 h,\quad h\in\calH.
\end{equation*}
Since
\begin{equation*}
  \begin{aligned}
   \mathbf{T}\left[
               \begin{array}{c}
                 h_1 \\
                 h_2 \\
               \end{array}
             \right]&=\left(
                       \begin{array}{cc}
                         T_1 & 2iK_1K_2^* \\
                         0 & T_2 \\
                       \end{array}
                     \right)\left[ \begin{array}{c}
                 h_1 \\
                 h_2 \\
               \end{array}
             \right]=\left[
               \begin{array}{c}
                 T_1 h_1+2i K_1K_2^*h_2 \\
                 T_2 h_2 \\
               \end{array}
             \right]\\
             &=\left[
               \begin{array}{c}
                 \lambda_0 h_1+2i  \sqrt{(\IM\lambda_0)\cdot(\IM\mu_0)}\,h_2\\
                 \mu_0 h_2 \\
               \end{array}
             \right],\quad\forall h_1,h_2\in\calH.
      \end{aligned}
   \end{equation*}
using elementary calculations one finds
\begin{equation*}
    \begin{aligned}
(\mathbf{T}-zI)^{-1}&=\left(
                       \begin{array}{cc}
                         \lambda_0-z & 2i\sqrt{\IM\lambda_0\cdot\IM\mu_0} \\
                         0 & \mu_0-z \\
                       \end{array}
                     \right)^{-1}\\
                     &=\frac{1}{(\lambda_0-z)(\mu_0-z)}\left(
                       \begin{array}{cc}
                         \mu_0-z & -2i\sqrt{\IM\lambda_0\cdot\IM\mu_0} \\
                         0 & \lambda_0-z \\
                       \end{array}
                     \right).
    \end{aligned}
\end{equation*}
Taking into account that
$$
2i\sqrt{\IM\lambda_0\cdot\IM\mu_0}=\sqrt{(\lambda_0-\bar\lambda_0)(\mu_0-\bar\mu_0)},
$$
and using representations
\begin{equation*}
    \mathbf{K}c=\left[
               \begin{array}{c}
                 K_1 c \\
                 K_2 c\\
               \end{array}
             \right]=\left[
               \begin{array}{c}
                 (\sqrt{\IM\lambda_0}\, c) h_0 \\
                (\sqrt{\IM\mu_0}\, c) h_0\\
               \end{array}
             \right].
\end{equation*}
and
\begin{equation*}
    \mathbf{K}^*\left[
               \begin{array}{c}
                 h_1 \\
                 h_2 \\
               \end{array}
             \right]=K_1^*h_1+K_2^*h_2=(h_1, \sqrt{\IM\lambda_0}\,h_0)+(h_2, \sqrt{\IM\mu_0}\,h_0).
\end{equation*}
we find
$$
    \begin{aligned}
2i\mathbf{K}^*(\mathbf{T}-zI)^{-1}\mathbf{K}&=\frac{2i\left((\mu_0-z)\IM\lambda_0-2i\IM\lambda_0\IM\mu_0+(\lambda_0-z)\IM \mu_0 \right)}{(\lambda_0-z)(\mu_0-z)}\\
&=\frac{(\lambda_0-\bar\lambda_0)(\bar\mu_0-z)+(\lambda_0-z)(\mu_0-\bar\mu_0)}{(\lambda_0-z)(\mu_0-z)}.
 \end{aligned}
 $$
 Finally,
 $$
    \begin{aligned}
W_\Theta(z)&=1-2i\mathbf{K}^*(\mathbf{T}-zI)^{-1}\mathbf{K}
=1-\frac{(\lambda_0-\bar\lambda_0)(\bar\mu_0-z)+(\lambda_0-z)(\mu_0-\bar\mu_0)}{(\lambda_0-z)(\mu_0-z)}\\
&=\frac{(\lambda_0-z)(\mu_0-z)-(\lambda_0-\bar\lambda_0)(\bar\mu_0-z)-(\lambda_0-z)(\mu_0-\bar\mu_0)}{(\lambda_0-z)(\mu_0-z)}\\
&=\frac{(\bar\lambda_0-z)(\bar\mu_0-z)}{(\lambda_0-z)(\mu_0-z)}=\frac{\bar\lambda_0-z}{\lambda_0-z}\cdot\frac{\bar\mu_0-z}{\mu_0-z}
=W_{\Theta_1}(z)\cdot W_{\Theta_2}(z),
 \end{aligned}
 $$
 completing the proof. Here we have used eq. \eqref{mult-W}.
\end{proof}


\end{document}